\newcommand{\deff}{\mbox{$\stackrel{\rm def}{=}$}}
\newcommand{\field}[1]{\mathbb{#1}}
\newcommand{\Z}{\field{Z}}
\newcommand{\F}{\field{F}}
\newcommand{\dS}{\field{S}}
\newcommand{\cG}{{\cal G}}
\newtheorem{theorem}{Theorem}
\newtheorem{lemma}{Lemma}
\newtheorem{cor}{Corollary}
\newtheorem{conj}{Conjecture}
\gdef\@punct{.\ \ }  
\def\@sect#1#2#3#4#5#6[#7]#8{%
  \ifnum #2>\c@secnumdepth
     \def\@svsec{}
  \else
     \refstepcounter{#1}\edef\@svsec{%
     \ifnum #2>0{{\csname the#1\endcsname}}.\fi%
    \hskip .5em}
  \fi
  \@tempskipa #5\relax
  \ifdim \@tempskipa>\z@
     \begingroup #6\relax
       \@hangfrom{\hskip #3\relax\@svsec}{\interlinepenalty \@M #8\par}
     \endgroup
     \csname #1mark\endcsname{#7}
     \addcontentsline{toc}{#1}{\ifnum #2>\c@secnumdepth\else
          \protect\numberline{\csname the#1\endcsname}\fi#7}
  \else
     \def\@svsechd{#6\hskip #3\@svsec #8\@punct\csname #1mark\endcsname{#7}
     \addcontentsline{toc}{#1}{\ifnum #2>\c@secnumdepth \else
          \protect\numberline{\csname the#1\endcsname}\fi#7}}
  \fi
  \@xsect{#5}}
\def\@ssect#1#2#3#4#5{\@tempskipa #3\relax
  \ifdim \@tempskipa>\z@
    \begingroup #4\@hangfrom{\hskip #1}{\interlinepenalty \@M #5\par}\endgroup
  \else \def\@svsechd{#4\hskip #1\relax #5\@punct}\fi
  \@xsect{#3}}
\begin{document}

\bibliographystyle{plain}

\title{\huge\bf $q$-Analogs of Steiner Systems\\[1.00ex]}

\author{
{\Large\sc Tuvi Etzion}\thanks{Department of Computer Science, Technion,
Haifa 32000, Israel, e-mail: {\tt etzion@cs.technion.ac.il}.} \and
{\Large\sc Alexander Vardy}\thanks{
University of California San Diego, La Jolla, CA 92093, USA,
e-mail: {\tt avardy@ucsd.edu}.}}

\date{$\,$\\ November 9, 2012}

\maketitle

\thispagestyle{empty}

\begin{abstract}
\noindent
A {Steiner structure} $\dS = \dS_q[t,k,n]$ is a set 
of $k$-dimensional subspaces of~$\F_q^n$ such that
each $t$-dim\-ensional subspace of $\F_q^n$ is contained in exactly one
subspace of $\dS$. 
Steiner structures~are the~$q$-analogs of Steiner systems;
they are presently known to exist only for $t = 1$, $t=k$, 
and\linebreak for $k = n$.
The existence of nontrivial $q$-analogs of Steiner systems
has occupied mathematicians for over three decades. In fact,
it was conjectured that they do not exist.
In this paper,~we show that 
nontrivial Steiner structures do exist.
First, we describe a general method which may be used
to produce Steiner structures. The method uses two
mappings in a finite field: the Frobenius map and
the cyclic shift map. These maps are applied to codes
in the Grassmannian, in order to form an automorphism
group of the Steiner structure. Using this method,
assisted by an exact-cover computer search, we explicitly
generate a Steiner structure $\dS_2[2,3,13]$. We conjecture
that many other Steiner structures, with different parameters,
exist.
\end{abstract}

\vspace{0.5cm}

\noindent
{\bf Keywords:}
cyclic shifts, cyclotomic cosets,
Frobenius map, Grassmannian scheme,
$q$-analogs, Steiner structures.

\footnotetext[1]{Research supported in part by the Israeli
Science Foundation (ISF), Jerusalem, Israel, under
Grant 10/12}

\newpage
\section{Introduction}
\label{sec:introduction}

The Grassmannian $\cG_q(n,k)$ is the set of all
$k$-dimensional subspaces of an $n$-dimensional
subspace over the finite field $\F_q$. A code
in $\cG_q(n,k)$ is a subset
of $\cG_q(n,k)$. There has been lot of interest in
these codes in the last five years due to their
application in network coding~\cite{KoKs}.
Our motivation for this work also came from this application in
network coding.

This paper is devoted to the existence of nontrivial Steiner
structures, known also as $q$-analog of Steiner systems.
A \emph{Steiner structure} $\dS_q[t,k,n]$ is a set $\dS$
of $k$-dimensional subspaces of~$\F_q^n$ such that
each $t$-dimensional subspace of $\F_q^n$ is contained in exactly one
subspace of $\dS$. Steiner structures were considered in many
papers~\cite{AAK,EtVa11,ScEt02,Tho1,Tho2}, where they have other names as well.
An $\dS_q[t,k,n]$ can be readily constructed for $t=k$
and for $k=n$. If $t=1$ these structures are called
$k$-\emph{spreads} and they are known to exist if and only
if $k$ divides~$n$. These structures are also considered to be
trivial. The first nontrivial case is a Steiner structure
$\dS_2[2,3,7]$. The possible existence of this structure was considered
by several authors, and some conjectured~\cite{Met99} that it doesn't exist
and that generally nontrivial
Steiner structures do not exist.

The main result of this paper is a description of a method to search
for a structured Steiner structure $\dS_p[2,k,n]$. The search was
successful in finding a Steiner structure $\dS_2[2,3,13]$.

The rest of this paper is organized as follows.
In Section~\ref{sec:map} we define two types of mappings
and state some of their properties. In Section~\ref{sec:Steiner}
we use the two types of mappings for an attempt to construct
nontrivial Steiner structures $\dS_p [2,k,n]$, where $p$ and $n$ are prime
integers.
In Section~\ref{sec:S2_3_13} we construct a Steiner structure $\dS_2[2,3,13]$.
In Section~\ref{sec:more} we discuss the possible existence of
more Steiner systems with the same parameters and with different parameters.
In Section~\ref{sec:related} we discuss some more combinatorial designs which are
related to the constructed Steiner structures.
Conclusion and problems for further research are given
in Section~\ref{sec:conclude}.

\section{Mappings in the Grassmannian}
\label{sec:map}

Let $\F_{p^n}$ be the finite field with $p^n$ elements,
where $p$ and $n$ are primes,
and let $\alpha$ be a primitive element of $\F_{p^n}$.

The \emph{Frobenius map} $\Upsilon_\ell$,
$0 \leq \ell \leq n-1$, $\Upsilon_\ell : \F_{p^n} \setminus \{ 0 \}
\longrightarrow \F_{p^n} \setminus \{ 0 \}$
is defined by $\Upsilon_\ell (x) \deff x^{p^\ell}$ for each
$x \in \F_{p^n} \setminus \{ 0 \}$.

The \emph{cyclic shift map} $\Phi_j$, $0 \leq j \leq p^n-2$,
$\Phi_j : \F_{p^n} \setminus \{ 0 \}
\longrightarrow \F_{p^n} \setminus \{ 0 \}$ is defined by
$\Phi_j (\alpha^i) \deff \alpha^{i+j}$, for each $0 \leq i \leq p^n-2$.

When we say the Frobenius map or the cyclic shift map, without specifying
a particular map, we will
mean $\Upsilon_1$ or $\Phi_1$, respectively.

The two types of mappings $\Upsilon_\ell$ and $\Phi_j$ can be applied
on a subset or a subspace, by applying the map on each
element of the subset or subspace, respectively. Formally,
given two integers $0 \leq \ell \leq n-1$ and $0 \leq j \leq p^n-2$,
$$
\Upsilon_\ell \{ x_1 , x_2 , \ldots , x_r \} \deff
\{ \Upsilon_\ell (x_1), \Upsilon_\ell (x_2),\ldots,\Upsilon_\ell(x_r) \}~,
$$
$$
\Phi_j \{ x_1 , x_2 , \ldots , x_r \} \deff \{ \Phi_j (x_1), \Phi_j
(x_2),\ldots,\Phi_j (x_r) \}.
$$

\begin{lemma}
\label{lem:inverse}
The mappings $\Upsilon_\ell$ and $\Phi_j$ are invertible.
\end{lemma}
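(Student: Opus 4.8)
The plan is to show invertibility of each map by exhibiting an explicit two-sided inverse within the stated domain $\F_{p^n}\setminus\{0\}$. For the cyclic shift map $\Phi_j$, the inverse is obviously $\Phi_{p^n-1-j}$ (indices taken modulo $p^n-1$): since $\alpha$ is primitive, every nonzero element of $\F_{p^n}$ is $\alpha^i$ for a unique $i$ with $0\le i\le p^n-2$, so $\Phi_j$ is a well-defined permutation of the cyclic group $\langle\alpha\rangle=\F_{p^n}\setminus\{0\}$, and $\Phi_j\circ\Phi_{p^n-1-j}=\Phi_0=\mathrm{id}$. I would state this first, as it is the easier half.

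For the Frobenius map $\Upsilon_\ell(x)=x^{p^\ell}$, I would argue that $\Upsilon_\ell=\Upsilon_1^\ell$ (that is, $\ell$-fold composition of the ordinary Frobenius $x\mapsto x^p$), so it suffices to invert $\Upsilon_1$, since the composition of invertible maps is invertible. The standard fact is that $x\mapsto x^p$ is an automorphism of $\F_{p^n}$ (additivity uses the freshman's dream $(x+y)^p=x^p+y^p$ in characteristic $p$, multiplicativity is clear), hence a bijection on $\F_{p^n}$ and, since it fixes $0$, also a bijection on $\F_{p^n}\setminus\{0\}$. Concretely one can even write down the inverse: $\Upsilon_1^{-1}=\Upsilon_{n-1}$, because $\Upsilon_1^n(x)=x^{p^n}=x$ for all $x\in\F_{p^n}$, so $\Upsilon_1\circ\Upsilon_{n-1}=\Upsilon_n=\mathrm{id}$. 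More generally $\Upsilon_\ell^{-1}=\Upsilon_{n-\ell}$ (with the convention $\Upsilon_0=\Upsilon_n=\mathrm{id}$), which matches the stated index range $0\le\ell\le n-1$ nicely.

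There is no real obstacle here; the only point requiring a word of care is that $\Upsilon_\ell$ is claimed to be a bijection of the punctured set $\F_{p^n}\setminus\{0\}$ rather than of all of $\F_{p^n}$, so I would remark that since $0^{p^\ell}=0$, the map $x\mapsto x^{p^\ell}$ restricts to a bijection of the complement of $\{0\}$. If one wants to avoid invoking field-automorphism theory, an alternative is the counting argument: $x^{p^\ell}=y^{p^\ell}$ implies $(x-y)^{p^\ell}=0$ hence $x=y$, so $\Upsilon_\ell$ is injective on the finite set $\F_{p^n}$, therefore surjective, therefore bijective, and it fixes $0$. I would present the explicit-inverse version as the cleaner argument. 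Finally, both maps extend elementwise to subsets (as defined in the paragraph preceding the lemma), and an elementwise bijection of finite sets is a bijection on the power set, so the invertibility statement transfers to the set-valued versions as well, which is the form in which the lemma will actually be used later.
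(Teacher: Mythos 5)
Your proof is correct and follows essentially the same route as the paper, which simply exhibits the explicit inverses $\Upsilon_\ell^{-1}=\Upsilon_{n-\ell}$ and $\Phi_j^{-1}=\Phi_{p^n-1-j}$; your version merely fills in the routine justification (Frobenius is a field automorphism fixing $0$, indices taken modulo $p^n-1$). Incidentally, your index $p^n-1-j$ is the right general form, whereas the paper's proof writes $2^n-1-j$, a harmless slip for $p\neq 2$.
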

\begin{proof}
Clearly, $\Upsilon_\ell^{-1} = \Upsilon_{n-\ell}$ and
$\Phi_j^{-1} = \Phi_{2^n-1-j}$.
\end{proof}

For a given integer $s \in \Z_{p^n-1}$,
the cyclotomic coset $C_s$ is defined by
$$
C_s \deff \{ s \cdot p^i ~:~ 0 \leq i \leq n-1 \}~.
$$
The smallest element in a cyclotomic coset is called
the \emph{coset representative}. Let $\rho (s)$ denote the
coset representative of $C_s$, i.e. if $r$ is the coset
representative for the coset of $s$, $C_s$,
then $r=\rho (s)$. The following lemmas are well-known and can be easily
verified.

\begin{lemma}
The size of a cyclotomic coset is either $n$ or \emph{one}. There are
exactly $\frac{p^n-p}{n}$ different cyclotomic cosets of size $n$.
\end{lemma}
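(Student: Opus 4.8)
The statement has two parts: (1) a cyclotomic coset $C_s \subseteq \Z_{p^n-1}$ has size either $n$ or $1$, and (2) there are exactly $\frac{p^n-p}{n}$ cosets of size $n$. For part (1), the idea is to view $C_s$ as the orbit of $s$ under the map $x \mapsto px$ acting on $\Z_{p^n-1}$. Since multiplication by $p$ is a bijection on $\Z_{p^n-1}$ (because $\gcd(p,p^n-1)=1$), the set $C_s = \{s, sp, sp^2, \ldots\}$ is a cyclic orbit, and its size $d$ is the least positive integer with $sp^d \equiv s \pmod{p^n-1}$. The key arithmetic fact to establish is that $p^n \equiv 1 \pmod{p^n-1}$ trivially, so $d \mid n$; and since $n$ is prime (as assumed throughout Section~\ref{sec:map}), $d \in \{1, n\}$. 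The subtle point worth noting is that without the primality of $n$ one would only get $d \mid n$; here primality of $n$ is exactly what collapses the divisors to $\{1,n\}$.

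For part (2), the plan is a counting argument. The cosets of size $1$ are precisely the $s$ with $sp \equiv s \pmod{p^n-1}$, i.e. $s(p-1) \equiv 0 \pmod{p^n-1}$, equivalently $s \equiv 0 \pmod{\frac{p^n-1}{p-1}}$ (using $\gcd(p-1, \frac{p^n-1}{p-1})$ considerations, or more directly noting $\frac{p^n-1}{p-1} = 1 + p + \cdots + p^{n-1} \equiv n \pmod{p-1}$, but the cleanest route is: the fixed points of multiplication-by-$p$ form the subgroup of order $p-1$ in $\Z_{p^n-1}$). Thus there are exactly $p-1$ elements lying in size-$1$ cosets, accounting for $p-1$ singleton cosets. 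The remaining $p^n-1-(p-1) = p^n - p$ elements of $\Z_{p^n-1}$ are partitioned into cosets each of size exactly $n$ by part (1), so the number of size-$n$ cosets is $\frac{p^n-p}{n}$.

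**Main obstacle.** The only real content is identifying the set of fixed points of $x \mapsto px$ on $\Z_{p^n-1}$ and verifying it has size exactly $p-1$. I would argue: $sp \equiv s \pmod{p^n-1}$ iff $(p-1)s \equiv 0 \pmod{p^n-1}$ iff $s$ is a multiple of $\frac{p^n-1}{\gcd(p-1,\,p^n-1)}$. Since $p \equiv 1 \pmod{p-1}$, we have $p^n - 1 \equiv 0 \pmod{p-1}$, so $\gcd(p-1, p^n-1) = p-1$, giving exactly $(p^n-1)/\frac{p^n-1}{p-1} = p-1$ solutions $s \in \Z_{p^n-1}$. Everything else is the bijectivity of multiplication by $p$ and the orbit-size-divides-$n$ fact, both immediate. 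Since the paper calls these lemmas "well-known and can be easily verified," I would keep the write-up to two or three lines, essentially just recording the two arithmetic observations above.
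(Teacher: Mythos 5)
Your proof is correct. The paper offers no proof of this lemma (it is stated as ``well-known and can be easily verified''), and your argument is exactly the standard verification the authors have in mind: the coset is the orbit of $s$ under multiplication by $p$ on $\Z_{p^n-1}$, its size divides $n$ and hence (by primality of $n$) is $1$ or $n$; the fixed points are the $p-1$ multiples of $\frac{p^n-1}{p-1}$, since $\gcd(p-1,p^n-1)=p-1$; and the remaining $p^n-p$ elements split into orbits of size $n$, giving $\frac{p^n-p}{n}$ such cosets. Your brief aside about $\gcd\bigl(p-1,\frac{p^n-1}{p-1}\bigr)$ is an unnecessary detour, but you correctly discard it in favor of the clean fixed-point count, and you rightly flag that primality of $n$ is what forces the orbit sizes to be $1$ or $n$.
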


\begin{lemma}
When applied on $\F_{p^n} \setminus \{ 0 \}$ the
Frobenius mappings forms an equivalence relation on
$\F_{p^n} \setminus \{ 0 \}$,
where  an equivalence class contains the powers of $\alpha$
which are exactly the elements of one cyclotomic cosets of
$\Z_{p^n-1}$.
\end{lemma}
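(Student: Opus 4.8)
The plan is to exhibit the relation explicitly, check the three axioms, and then match its classes to the cyclotomic cosets. Define $x \sim y$ on $\F_{p^n}\setminus\{0\}$ to mean $y = \Upsilon_\ell(x)$ for some $\ell$ with $0 \le \ell \le n-1$; equivalently, $y$ lies in the orbit of $x$ under the group generated by the Frobenius map $\Upsilon_1$. First I would record the one field-theoretic fact that does all the work: every $x \in \F_{p^n}$ satisfies $x^{p^n}=x$, so $\Upsilon_n = \Upsilon_0 = \mathrm{id}$ on $\F_{p^n}\setminus\{0\}$, and more generally $\Upsilon_\ell = \Upsilon_{\ell \bmod n}$ since $x^{p^{\ell+n}} = (x^{p^\ell})^{p^n} = x^{p^\ell}$. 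Consequently $\Upsilon_\ell \circ \Upsilon_m = \Upsilon_{\ell+m}$ with indices read modulo $n$, so $\Upsilon_0,\Upsilon_1,\dots,\Upsilon_{n-1}$ form a cyclic group acting on $\F_{p^n}\setminus\{0\}$.

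Reflexivity is immediate from $\Upsilon_0 = \mathrm{id}$. Symmetry follows from Lemma~\ref{lem:inverse}: if $y = \Upsilon_\ell(x)$ then $x = \Upsilon_\ell^{-1}(y) = \Upsilon_{n-\ell}(y)$, and $n-\ell$ lies in $\{0,1,\dots,n-1\}$ (the case $\ell=0$ being trivial). Transitivity follows from the composition rule: if $y = \Upsilon_\ell(x)$ and $z = \Upsilon_m(y)$, then $z = \Upsilon_{\ell+m}(x) = \Upsilon_{(\ell+m)\bmod n}(x)$. Hence $\sim$ is an equivalence relation whose classes are exactly the orbits of the cyclic group $\{\Upsilon_0,\dots,\Upsilon_{n-1}\}$ on $\F_{p^n}\setminus\{0\}$.

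Finally I would identify the classes with cyclotomic cosets. Since $\alpha$ is primitive, every nonzero element is $\alpha^i$ for a unique $i \in \Z_{p^n-1}$, and $\Upsilon_\ell(\alpha^i) = \alpha^{i p^\ell} = \alpha^{\,i p^\ell \bmod (p^n-1)}$. Therefore the class of $\alpha^i$ is $\{\alpha^j : j \in \{i p^\ell \bmod (p^n-1) : 0 \le \ell \le n-1\}\}$, which by definition is $\{\alpha^j : j \in C_i\}$ — precisely the powers of $\alpha$ whose exponents form the cyclotomic coset $C_i$ of $\Z_{p^n-1}$, as asserted. I do not expect a genuine obstacle here: the argument is routine, and the only point requiring a moment of care is the reduction of Frobenius indices modulo $n$ (equivalently, of exponents modulo $p^n-1$), which rests entirely on the single identity $x^{p^n}=x$.
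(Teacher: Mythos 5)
Your argument is correct: the orbit relation of the cyclic group $\{\Upsilon_0,\dots,\Upsilon_{n-1}\}$ (using $x^{p^n}=x$ for the composition rule and Lemma~\ref{lem:inverse} for symmetry) is an equivalence relation, and writing $\Upsilon_\ell(\alpha^i)=\alpha^{ip^\ell \bmod (p^n-1)}$ identifies each class with a cyclotomic coset $C_i$. The paper offers no proof at all, dismissing the lemma as ``well-known and can be easily verified,'' so your write-up simply supplies the standard verification that the authors had in mind; there is no gap.
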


\begin{lemma}
\label{lem:isomorphism}
The finite field $\F_{p^m}$ and the vector space $\F_p^n$
are isomorphic.
\end{lemma}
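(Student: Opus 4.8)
The plan is to recognize $\F_{p^n}$ as a finite-dimensional vector space over its prime subfield and then count. First I would exhibit a copy of $\F_p$ inside $\F_{p^n}$: the additive subgroup generated by the multiplicative identity $1$ is a subfield with exactly $p$ elements (here we use that $p$ is the characteristic, so $\Z/p\Z$ is a field embedding into $\F_{p^n}$). Restricting the field multiplication of $\F_{p^n}$ to $\F_p \times \F_{p^n}$ yields a scalar multiplication, and the field axioms for $\F_{p^n}$ directly give all the vector-space axioms (distributivity on both sides, associativity of scalars, and the unital action of $1$). Hence $\F_{p^n}$ is an $\F_p$-vector space; since it is a finite set it cannot contain an infinite linearly independent family, so it has a finite dimension, say $d$.

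Next I would pin down $d$. Fixing a basis $e_1,\dots,e_d$ of $\F_{p^n}$ over $\F_p$, every element is uniquely of the form $\sum_{i=1}^{d} a_i e_i$ with $a_i \in \F_p$, so $\abs{\F_{p^n}} = p^d$. Comparing with $\abs{\F_{p^n}} = p^n$ forces $d = n$.

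Finally I would invoke the standard fact that two vector spaces over the same field with the same finite dimension are isomorphic: for an ordered basis $e_1,\dots,e_n$ of $\F_{p^n}$ over $\F_p$, the coordinate map $\sum_{i=1}^{n} a_i e_i \mapsto (a_1,\dots,a_n)$ is an $\F_p$-linear bijection onto $\F_p^n$. Concretely, one may take $e_i = \alpha^{i-1}$ where $\alpha$ is a root of a degree-$n$ irreducible polynomial over $\F_p$ (for instance the minimal polynomial of the primitive element fixed at the start of this section), which makes the isomorphism completely explicit.

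There is no substantive obstacle here; the only points worth a word of care are that the prime subfield really has $p$ elements (this is where the primality of $p$ enters) and that the $\F_p$-dimension is finite to begin with (immediate from $\F_{p^n}$ being a finite set). Everything else is a direct unwinding of definitions together with the elementary linear algebra of finite-dimensional vector spaces.
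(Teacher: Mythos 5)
Your proof is correct: you show $\F_{p^n}$ is an $n$-dimensional vector space over its prime subfield and then use the coordinate map with respect to a basis (e.g.\ $1,\alpha,\dots,\alpha^{n-1}$), which is the standard argument. The paper itself offers no proof, stating the lemma as well-known, so your write-up simply supplies the routine verification the authors omit (note the paper's statement has a typo, $\F_{p^m}$ versus $\F_p^n$, which you correctly read as the same exponent $n$).
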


In view of Lemma~\ref{lem:isomorphism} we can apply the Frobenius mappings and
the cyclic shifts mappings on $\F_p^m$ exactly as they are applies
on $\F_{p^m}$. If $h: \F_{p^m} \longrightarrow \F_p^n$ is the
isomorphism from $\F_{p^m}$ to $\F_p^m$ such that $y=h(x)$ for
$y \in \F_p^m$ and $x \in \F_{p^m}$ then
$$
\Upsilon_\ell (y) \deff h(\Upsilon_\ell (x)) ~~ \text{and} ~~ \Phi_j (y) \deff h(\Phi_j (x)) ~,
$$
for every $0 \leq \ell \leq n-1$ and $0 \leq j \leq p^n-2$.

\begin{lemma}
\label{lem:equiv}
Let $X$ and $Y$ be two $k$-dimensional subspaces of $\F_p^n$
such that there exist two integers, $\ell_1$, $0 \leq \ell_1 \leq n-1$,
and $j_1$, $0 \leq j_1 \leq p^n-1$, such that $Y= \Phi_{j_1} (\Upsilon_{\ell_1} (X))$.
Then there exist two integers, $\ell_2$, $0 \leq \ell_2 \leq n-1$,
and $j_2$, $0 \leq j_2 \leq p^n-1$, such that $Y= \Upsilon_{\ell_2} ( \Phi_{j_2} (X))$.
\end{lemma}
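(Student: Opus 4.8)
The plan is to exploit the fact that the Frobenius map and the cyclic shift map almost commute: conjugating a cyclic shift by a Frobenius map again yields a cyclic shift, only with a rescaled shift parameter. First I would record the computation
$$
\Upsilon_\ell(\Phi_j(\alpha^i)) = \alpha^{(i+j)p^\ell} = \Phi_{jp^\ell}(\Upsilon_\ell(\alpha^i))~,
$$
valid for every $i$, so that as maps on $\F_{p^n} \setminus \{0\}$ we have the identity $\Upsilon_\ell \circ \Phi_j = \Phi_{jp^\ell} \circ \Upsilon_\ell$, where the index $jp^\ell$ is read modulo $p^n-1$. By Lemma~\ref{lem:isomorphism} the induced maps on $\F_p^n$ satisfy the same identity, and hence so do their applications to any subspace $X$.

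Next I would invert this relation to move the Frobenius map to the outside. Since $\gcd(p, p^n-1) = 1$, the element $p$ is a unit in $\Z_{p^n-1}$, and therefore so is $p^{\ell_1}$; let $u \in \Z_{p^n-1}$ be its inverse. Setting $\ell_2 \deff \ell_1$ and $j_2 \deff j_1 u \bmod (p^n-1)$, the commutation identity applied with $j = j_2$ and $\ell = \ell_1$ gives
$$
\Upsilon_{\ell_1}(\Phi_{j_2}(X)) = \Phi_{j_2 p^{\ell_1}}(\Upsilon_{\ell_1}(X)) = \Phi_{j_1}(\Upsilon_{\ell_1}(X)) = Y~,
$$
which is exactly the desired conclusion.

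There is essentially no real obstacle in this argument; the only point requiring care is that the shift index must be manipulated inside the cyclic group $\Z_{p^n-1}$ rather than in $\Z$, and that every power of $p$ is invertible modulo $p^n-1$, which is what makes $j_2$ well defined. The constraint $0 \leq j_2 \leq p^n-2$ in the statement is then automatic after reducing modulo $p^n-1$. Finally, the computation is symmetric, so the same manipulation (now with $j_1 \deff j_2 p^{\ell_2}$) shows conversely that any subspace of the form $\Upsilon_{\ell_2}(\Phi_{j_2}(X))$ can be written as $\Phi_{j_1}(\Upsilon_{\ell_1}(X))$; one could include this remark to make clear that the two ways of composing the maps generate the same orbits.
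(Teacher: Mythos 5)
Your proof is correct and follows essentially the same route as the paper: both arguments rest on the commutation relation between the Frobenius map and the cyclic shift, take $\ell_2=\ell_1$, and choose $j_2$ to be $j_1$ times the inverse of $p^{\ell_1}$ modulo $p^n-1$, which is exactly the paper's $j_2 = j_1 p^{\,n-\ell_1}$. Your formulation via the explicit identity $\Upsilon_\ell \circ \Phi_j = \Phi_{jp^\ell} \circ \Upsilon_\ell$ is just a slightly more streamlined presentation of the same computation.
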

\begin{proof}
Let $X$ and $Y$ be two $k$-dimensional subspaces of $\F_p^n$
such that there exist two integers, $\ell_1$, $0 \leq \ell_1 \leq n-1$,
and $j_1$, $0 \leq j_1 \leq p^n-1$, such that $Y= \Phi_{j_1} (\Upsilon_{\ell_1} (X))$.
Then
$$
Y = \alpha^j X^{p^{\ell_1}} \Longrightarrow Y^{p^{n-\ell_1}} = (\alpha^{j_1} X^{p^{\ell_1}})^{p^{n-\ell_1}} \Longrightarrow \alpha^{p^n-1-j_1p^{n-\ell_1}} Y^{p^{n-\ell_1}}=X~,
$$
and hence $X= \Phi_{{p^n-1-j_1p^{n-\ell_1}}} (\Upsilon_{p^{n-\ell_1}}(Y))$. Thus,
$Y=  \Upsilon_{\ell_1} (\Phi_{j_1p^{n-\ell_1}}(X))$.
\end{proof}
Similar manipulations as the ones in Lemma~\ref{lem:equiv} lead to the following consequence.
\begin{cor}
The combination of the Frobenius map and the cyclic shift mappings
induces an equivalence relation of the set of all $k$-dimensional subspaces
of $\F_p^n$.
\end{cor}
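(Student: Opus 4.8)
The plan is to verify the three defining properties of an equivalence relation for the relation $\sim$ on $k$-dimensional subspaces of $\F_p^n$ defined by $X \sim Y$ if and only if $Y = \Phi_j(\Upsilon_\ell(X))$ for some $0 \le \ell \le n-1$ and $0 \le j \le p^n-2$. First I would note that each $\Upsilon_\ell$ is an $\F_p$-linear bijection of $\F_{p^n}$ (Frobenius is additive and fixes $\F_p$) and each $\Phi_j$ is multiplication by the fixed scalar $\alpha^j$ (extended by $0 \mapsto 0$), so both families of maps, and hence every composition $\Phi_j \circ \Upsilon_\ell$, send $k$-dimensional subspaces to $k$-dimensional subspaces; thus $\sim$ is genuinely a relation on $\cG_p(n,k)$.

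Reflexivity is immediate by taking $\ell = j = 0$, since $\Upsilon_0$ and $\Phi_0$ are the identity. Symmetry is essentially the content of the computation in the proof of Lemma~\ref{lem:equiv}: from $Y = \Phi_{j_1}(\Upsilon_{\ell_1}(X))$ one solves for $X$ to obtain $X = \Phi_{j'}(\Upsilon_{\ell'}(Y))$ with $\ell' = (n-\ell_1) \bmod n$ and $j' = (p^n-1-j_1 p^{n-\ell_1}) \bmod (p^n-1)$. I would repeat that manipulation, being careful to reduce the index of $\Upsilon$ modulo $n$ using $\Upsilon_n = \Upsilon_0$ (valid because $x^{p^n}=x$ on $\F_{p^n}$) and the index of $\Phi$ modulo $p^n-1$.

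The only substantive step is transitivity. Suppose $Y = \Phi_{j_1}(\Upsilon_{\ell_1}(X))$ and $Z = \Phi_{j_2}(\Upsilon_{\ell_2}(Y))$. The key identity — the ``similar manipulations'' alluded to after Lemma~\ref{lem:equiv} — is the commutation rule
\[
\Upsilon_{\ell}(\Phi_{j}(W)) = \Phi_{j p^{\ell}}(\Upsilon_{\ell}(W)),
\]
which holds because $\Phi_j(W) = \alpha^j W$ and $(\alpha^j W)^{p^\ell} = \alpha^{j p^\ell} W^{p^\ell}$. Applying it with $W = \Upsilon_{\ell_1}(X)$ and then using the composition laws $\Phi_a \circ \Phi_b = \Phi_{a+b}$ and $\Upsilon_a \circ \Upsilon_b = \Upsilon_{a+b}$ gives
\[
Z = \Phi_{j_2}\bigl(\Phi_{j_1 p^{\ell_2}}(\Upsilon_{\ell_2}(\Upsilon_{\ell_1}(X)))\bigr) = \Phi_{j_2 + j_1 p^{\ell_2}}(\Upsilon_{\ell_1 + \ell_2}(X)).
\]
Reducing $\ell_1+\ell_2$ modulo $n$ and $j_2 + j_1 p^{\ell_2}$ modulo $p^n-1$ puts this in the required form $Z = \Phi_{j_3}(\Upsilon_{\ell_3}(X))$, so $X \sim Z$.

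I do not expect any real obstacle: the whole argument reduces to the commutation rule displayed above together with the composition laws for the two families of maps and the index reductions modulo $n$ and $p^n-1$. The mildly delicate point is purely bookkeeping — keeping track of the two moduli and the fact that $\Phi_j$ acts by a scalar while $\Upsilon_\ell$ is a field automorphism, which is exactly why the shift index gets multiplied by $p^{\ell}$ when the two maps are swapped.
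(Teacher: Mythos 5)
Your proof is correct and follows essentially the same route the paper intends: the paper disposes of the corollary by appeal to the "similar manipulations" of Lemma~\ref{lem:equiv}, and your verification of reflexivity, symmetry, and transitivity via the commutation rule $\Upsilon_{\ell}(\Phi_{j}(W)) = \Phi_{j p^{\ell}}(\Upsilon_{\ell}(W))$ is exactly that manipulation carried out explicitly (your symmetry step even fixes the paper's typo $\Upsilon_{p^{n-\ell_1}}$, which should read $\Upsilon_{n-\ell_1}$). No gaps; the bookkeeping of indices modulo $n$ and $p^n-1$ is handled correctly.
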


\section{Steiner Structures $\dS_p[2,k,n]$}
\label{sec:Steiner}

We suggest to construct a set $\dS$ of $k$-dimensional
subspaces of $\F_p^n$, $n$ prime,
which has the cyclic shift map
and the Frobenius map as generators for its automorphism groups.
For simplicity we will consider the case where $p=2$,
but the method is easily generalized for larger primes.
The nonzero elements of the field will be represented as one cycle
for this construction. Given a $k$-dimensional subspace
$$\{ {\bf 0} , \alpha^{i_1}, \alpha^{i_2} , \ldots , \alpha^{i_{2^k-1}} \}$$
in $\dS$, we require that for each $0 \leq \ell \leq n-1$
and $0 \leq j \leq 2^n-2$,
$$\{ {\bf 0} , \alpha^{i_1 2^\ell +j}, \alpha^{i_2 2^\ell +j} , \ldots , \alpha^{i_{2^k-1} 2^\ell +j} \}$$
will be also a $k$-dimensional subspace of $\dS$. In other words, $X \in \dS$ if and only if
${\Phi_j (  \Upsilon_\ell (X)) \in \dS}$, for every  $0 \leq \ell \leq n-1$ and $0 \leq j \leq p^n-2$.

For a given $k$-dimensional subspace
$$X=\{ {\bf 0} , \alpha^{i_1}, \alpha^{i_2} , \ldots , \alpha^{i_{2^k-1}} \}$$
of $\F_2^n$, let the \emph{difference set} of $X$, $\Delta (X)$,
be the set of integers defined by
$$
\Delta (X) \deff \{ i_r - i_s ~:~  1 \leq r,~s \leq 2^k-1,~ r \neq s \}~.
$$

\begin{lemma}
If $X$ is a $k$-dimensional subspace of $\F_2^n$, where $\gcd (2^k-1,2^n-1)=1$,
then the cyclic shifts of $X$ form $2^n-1$ distinct $k$-dimensional subspaces.
\end{lemma}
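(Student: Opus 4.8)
The plan is to show that the only cyclic shift fixing $X$ is the trivial one, from which the claim follows by a counting argument.

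First I would record the routine reductions. Under the isomorphism of Lemma~\ref{lem:isomorphism}, the map $\Phi_j$ acts on $\F_{2^n}$ as multiplication by $\alpha^j$ (with $\Phi_j({\bf 0}) = {\bf 0}$), and multiplication by a nonzero field element is an invertible $\F_2$-linear map of $\F_{2^n}$; hence $\Phi_j(X) = \alpha^j X$ is again a $k$-dimensional subspace of $\F_2^n$. Moreover the maps compose by $\Phi_j \circ \Phi_{j'} = \Phi_{j+j'}$ with indices modulo $2^n-1$, so they form a cyclic group of order $2^n-1$, and $\Phi_j(X) = \Phi_{j'}(X)$ if and only if $\Phi_{j-j'}(X) = X$. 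Thus it suffices to prove that $\Phi_j(X) = X$ forces $j \equiv 0 \pmod{2^n-1}$: this makes the map $j \mapsto \Phi_j(X)$ injective on $\Z_{2^n-1}$, giving exactly $2^n-1$ distinct subspaces.

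For the core step, suppose $\Phi_j(X) = X$, i.e. $\alpha^j X = X$, and let $d$ be the multiplicative order of $\alpha^j$, so $d \mid 2^n-1$. Since $\alpha^j {\bf 0} = {\bf 0}$, the set $X' \deff X \setminus \{{\bf 0}\}$, which has $2^k-1$ elements, is invariant under multiplication by $\alpha^j$, hence carries an action of the cyclic group $\F_{2^n}^{*}$'s subgroup $\langle \alpha^j\rangle$ of order $d$. This action is free: if $\alpha^{mj} x = x$ for some $x \in X'$, then cancelling $x$ in the field gives $\alpha^{mj} = 1$. So every orbit has size exactly $d$, and since the orbits partition $X'$ we obtain $d \mid 2^k-1$. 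Together with $d \mid 2^n-1$ and the hypothesis $\gcd(2^k-1, 2^n-1) = 1$ this forces $d = 1$, whence $\alpha^j = 1$ and $j \equiv 0 \pmod{2^n-1}$, as required.

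I do not expect a serious obstacle: the argument is essentially a one-line orbit count, and the only place the field structure is genuinely used is the freeness of the $\langle\alpha^j\rangle$-action (cancellation), everything else being Lagrange's theorem and the coprimality hypothesis. (The same computation can be phrased additively: writing $X' = \{\alpha^i : i \in S\}$ with $S \subseteq \Z_{2^n-1}$ of size $2^k-1$, the condition $\Phi_j(X) = X$ says $S + j = S$, so the subgroup $\langle j\rangle \le \Z_{2^n-1}$ acts freely by translation on $S$, giving $|\langle j\rangle| \mid \gcd(2^k-1,2^n-1) = 1$.) The degenerate case $k = 0$ (where $X = \{{\bf 0}\}$) is excluded implicitly, since there the statement would require $2^n-1 = 1$.
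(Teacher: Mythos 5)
Your proof is correct. Note that the paper itself states this lemma without any proof (it is presented as an easy observation), so there is nothing to compare against; your argument is the natural one that fills the gap. The key step checks out: if $\alpha^j X = X$ and $d$ is the order of $\alpha^j$, then $\langle \alpha^j\rangle$ acts freely on $X\setminus\{{\bf 0}\}$ because a field has no zero divisors, so $d$ divides $2^k-1$ as well as $2^n-1$, and the coprimality hypothesis forces $d=1$; injectivity of $j \mapsto \Phi_j(X)$ on $\Z_{2^n-1}$ then gives exactly $2^n-1$ distinct shifts, each of which is again a $k$-dimensional subspace since multiplication by $\alpha^j$ is $\F_2$-linear and invertible.
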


A $k$-dimensional subspace $X$ of $\F_2^n$ will be called \emph{complete}
if $| \Delta (X)| =(2^k-1)(2^k-2)$. Two complete $k$-dimensional subspaces $X,~Y$ of $\F_2^n$
will be called \emph{disjoint complete} if $\Delta (X) \cap \Delta (Y) = \varnothing$.
Each $k$-dimensional subspace $X$ of $\F_2^n$ contains
$\frac{(2^k-1)(2^{k-1}-1)}{3}$ two-dimensional
subspaces of $\F_2^n$. If $| \Delta (X)| =(2^k-1)(2^k-2)$ then no two of these
$\frac{(2^k-1)(2^{k-1}-1)}{3}$ two-dimensional subspaces are cyclic shifts of each other.
Hence, we have

\begin{lemma}
If $X$ is a $k$-dimensional subspace of $\F_2^n$ and $| \Delta (X)| = \frac{(2^k-1)(2^{k-1}-1)}{3}$
then the cyclic shifts of $X$ form $2^n-1$ distinct $k$-dimensional subspaces.
The $\frac{(2^k-1)(2^{k-1}-1)}{3} \cdot (2^n-1)$ two-dimensional subspaces of these
$2^n-1$ distinct $k$-dimensional subspaces are all distinct.
\end{lemma}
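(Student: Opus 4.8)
The plan is to translate both assertions into coincidence conditions on the exponents and then read everything off the hypothesis. Write $X \setminus \{{\bf 0}\}$ as an exponent set $A = \{i_1,\dots,i_{2^k-1}\} \subseteq \Z_{2^n-1}$, so that $\Phi_j$ acts by $A \mapsto A+j$ and $\Delta(X) = \{a-a' : a,a' \in A,\ a \ne a'\}$. The first claim, that the cyclic shifts of $X$ are $2^n-1$ distinct subspaces, is precisely the statement that the stabilizer $\{d : A+d = A\}$ is trivial, by orbit--stabilizer. For the second claim I would first reduce to a statement about the $\frac{(2^k-1)(2^{k-1}-1)}{3}$ two-dimensional subspaces of $X$: a two-dimensional subspace $T$ occurs in $\Phi_j(X)$ exactly when $\Phi_{-j}(T) \subseteq X$, so a repetition among the listed $\frac{(2^k-1)(2^{k-1}-1)}{3}\cdot(2^n-1)$ subspaces means some two-dimensional subspace has two distinct cyclic shifts sitting inside $X$; ruling this out (together with the first claim, which guarantees the count is exactly $\frac{(2^k-1)(2^{k-1}-1)}{3}\cdot(2^n-1)$) is what must be proved.

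Next I would record how $\Delta(X)$ governs these coincidences. If $A+d = A$ with $d \ne 0$, then $a+d \in A$ for every $a \in A$, so the single value $d$ is realized by all $2^k-1$ pairs $(a+d,a)$; a nontrivial stabilizer therefore forces one element of $\Delta(X)$ to be attained at least $2^k-1$ times. Similarly, if two distinct two-dimensional subspaces of $X$ are cyclic shifts of each other, their three internal differences coincide, producing a collision among the $(2^k-1)(2^k-2)$ ordered differences. Since $(2^k-1)(2^k-2) = 6\cdot\frac{(2^k-1)(2^{k-1}-1)}{3}$ and these ordered differences group six-at-a-time according to the two-dimensional subspaces, the cardinality $|\Delta(X)|$ is exactly the bookkeeping device that counts such collisions, and the aim is to exclude both directly from the hypothesized value of $|\Delta(X)|$.

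The decisive step — and in fact a complete proof of the statement exactly as worded — is a consistency check on the hypothesis $|\Delta(X)| = \frac{(2^k-1)(2^{k-1}-1)}{3}$. The set $\Delta(X)$ is closed under $d \mapsto -d$, and since $\gcd(2,2^n-1)=1$ this involution has no nonzero fixed point on $\Z_{2^n-1}$ (a fixed point would satisfy $2d \equiv 0 \pmod{2^n-1}$, hence $d=0$, which is excluded because the $i_r$ are distinct). Therefore $\Delta(X)$ splits into two-element orbits $\{d,-d\}$, so $|\Delta(X)|$ is \emph{even}. On the other hand $3\cdot\frac{(2^k-1)(2^{k-1}-1)}{3} = (2^k-1)(2^{k-1}-1)$ is a product of two odd integers, whence $\frac{(2^k-1)(2^{k-1}-1)}{3}$ is \emph{odd} for every $k \ge 2$. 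An even integer cannot equal an odd one, so for $k \ge 2$ the hypothesis is never satisfied and the lemma holds vacuously; the case $k=1$ gives $\frac{(2^k-1)(2^{k-1}-1)}{3}=0=|\Delta(X)|$ with no two-dimensional subspaces present, so the conclusion is trivially true there. I would present this parity argument as the proof of the statement as literally stated.

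I expect the main obstacle to be precisely this parity mismatch: the reduction developed in the first two paragraphs is designed for the opposite regime, in which the $(2^k-1)(2^k-2)$ ordered differences are as spread out as possible rather than as concentrated as the written cardinality would require, and it is only under such a non-coincidence assumption that the stabilizer is forced trivial and the six internal differences of two distinct two-dimensional subspaces cannot all match. The genuinely delicate point in that non-vacuous reading is the second claim, where two distinct two-dimensional subspaces of $X$ may still share one nonzero vector; one handles it by noting that distinct two-dimensional subspaces share no \emph{pair} of nonzero vectors and hence contribute disjoint pairs to $\Delta(X)$, so that a coincidence of their internal difference sets is incompatible with the differences being distinct.
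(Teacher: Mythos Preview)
Your parity observation is correct and the argument is valid: $\Delta(X)$ is closed under $d\mapsto -d$ in $\Z_{2^n-1}$, and since $2^n-1$ is odd this involution has no nonzero fixed point, so $|\Delta(X)|$ is even; meanwhile $\frac{(2^k-1)(2^{k-1}-1)}{3}$ is odd for every $k\ge 2$. Hence the lemma, \emph{as literally worded}, is vacuous for $k\ge 2$, and your proof is complete.

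What you have detected, however, is a misprint. The sentence immediately preceding the lemma in the paper reads ``If $|\Delta(X)| = (2^k-1)(2^k-2)$ then no two of these $\frac{(2^k-1)(2^{k-1}-1)}{3}$ two-dimensional subspaces are cyclic shifts of each other. Hence, we have\ldots'', and the lemma is meant to record the consequence of \emph{that} hypothesis, i.e.\ that $X$ is complete (all $(2^k-1)(2^k-2)$ ordered nonzero differences are distinct). The paper offers no further argument; that preceding sentence \emph{is} its proof. Your first two paragraphs already reconstruct exactly this reasoning for the intended hypothesis: a nontrivial shift $A+d=A$ would force the single value $d$ to occur $2^k-1>1$ times among the ordered differences, and two two-dimensional subspaces of $X$ that are cyclic shifts of one another would have identical six-element ordered difference sets, both impossible when the $(2^k-1)(2^k-2)$ differences are pairwise distinct.

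So for the intended statement your sketch and the paper's argument coincide. The parity trick is an elegant way to dispatch the typo, but it is orthogonal to the content the paper actually uses downstream (the construction of cyclic Steiner structures from pairwise disjoint complete subspaces).
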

The first consequence of the theory is
\begin{theorem}
\label{thm:cyclic}
If there exist $\frac{2^n-2}{(2^k-1)(2^k-2)}$ pairwise disjoint complete
$k$-dimensional subspaces then there exists a Steiner structure
$\dS_2[2,k,n]$.
\end{theorem}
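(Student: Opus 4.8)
The plan is to exhibit an explicit Steiner structure $\dS_2[2,k,n]$ by taking the orbit of a suitable family of complete $k$-dimensional subspaces under the group generated by the cyclic shift map $\Phi_1$ and the Frobenius map $\Upsilon_1$. Suppose $X_1, X_2, \dots, X_m$ are pairwise disjoint complete $k$-dimensional subspaces, where $m = \frac{2^n-2}{(2^k-1)(2^k-2)}$. For each $X_r$ I would first take all $2^n-1$ cyclic shifts $\Phi_j(X_r)$, $0 \leq j \leq 2^n-2$; since $X_r$ is complete, $|\Delta(X_r)| = (2^k-1)(2^k-2)$, so in particular $\Delta(X_r)$ contains every nonzero difference needed and the hypothesis of the earlier lemma on distinct cyclic shifts applies, giving $2^n-1$ distinct subspaces whose associated two-dimensional subspaces are all distinct. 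Define $\dS \deff \{ \Phi_j(\Upsilon_\ell(X_r)) : 1 \leq r \leq m,\ 0 \leq j \leq 2^n-2,\ 0 \leq \ell \leq n-1 \}$, which by construction is closed under both maps, as required.

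The core of the argument is a counting/covering check: every two-dimensional subspace of $\F_2^n$ lies in exactly one member of $\dS$. First I would count the total number of two-dimensional subspaces of $\F_2^n$, namely $\sbinomtwo{n}{2} = \frac{(2^n-1)(2^n-2)}{(2^2-1)(2^2-2)} = \frac{(2^n-1)(2^n-2)}{6}$. On the other side, each complete $X_r$ contains $\frac{(2^k-1)(2^{k-1}-1)}{3}$ two-dimensional subspaces, and as $X_r$ ranges over its $2^n-1$ cyclic shifts these are all distinct; the Frobenius orbit may identify some of these shift-orbits with each other, but I would instead count at the level of \emph{two-dimensional subspaces covered}. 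The key point is that the difference sets $\Delta(X_r)$ are pairwise disjoint and each has size $(2^k-1)(2^k-2)$; together over all $r$ they occupy $m \cdot (2^k-1)(2^k-2) = 2^n-2$ nonzero values, i.e.\ \emph{all} of $\Z_{2^n-1} \setminus \{0\}$, each exactly once. Since a two-dimensional subspace $\{{\bf 0}, \alpha^a, \alpha^b, \alpha^c\}$ is determined up to cyclic shift by the unordered triple of differences it produces, and membership of such a subspace inside some $\Phi_j(X_r)$ is controlled precisely by which $\Delta(X_r)$ the relevant differences fall into, the disjoint-and-complete condition forces each two-dimensional subspace to be covered, and the exact-size condition forces it to be covered exactly once.

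Concretely, I would argue: pick any two-dimensional subspace $W$. Translating a nonzero element of $W$ to $\alpha^0$, $W$ becomes $\{{\bf 0}, 1, \alpha^a, \alpha^b\}$ for some $a,b$; the three pairwise differences coming from the nonzero points are $a$, $b$, $b-a$ (and negatives), all of which are distinct nonzero elements of $\Z_{2^n-1}$. Exactly one $r$ has $a \in \Delta(X_r)$, so $W$ (a suitable cyclic shift of it) sits inside a cyclic shift of $X_r$; one then checks the other two differences $b, b-a$ automatically lie in the \emph{same} $\Delta(X_r)$ because $X_r$ being complete means it already contains a two-dimensional subspace realizing difference $a$, and the three differences of any two-dimensional subspace are tied together by the field structure — this is exactly the rigidity that makes the covering well-defined. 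Uniqueness follows because if $W$ were in two distinct members of $\dS$, their difference sets would overlap (both containing $a$), contradicting disjointness after reducing modulo the Frobenius action. Summing up: the $(2^n-1) \cdot m \cdot \frac{(2^k-1)(2^{k-1}-1)}{3}$ incidences cover all $\frac{(2^n-1)(2^n-2)}{6}$ two-dimensional subspaces, and a short arithmetic identity $m \cdot \frac{(2^k-1)(2^{k-1}-1)}{3} \cdot (2^n-1) = \frac{(2^n-1)(2^n-2)}{6}$ confirms no two-dimensional subspace is covered twice.

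The main obstacle I anticipate is making precise the claim that the three pairwise differences of a two-dimensional subspace are "tied together" so that landing one difference in $\Delta(X_r)$ forces the whole subspace into a shift of $X_r$ — equivalently, showing that the partition of $\Z_{2^n-1}\setminus\{0\}$ into the difference sets $\Delta(X_r)$ refines consistently to a partition of the two-dimensional subspaces. I would handle this by working directly with the \emph{lines} (two-dimensional subspaces): each complete $X_r$ contributes a set $L_r$ of $\frac{(2^k-1)(2^{k-1}-1)}{3}$ lines, no two a cyclic shift of each other; the cyclic shifts of $L_r$ give $(2^n-1)\cdot\frac{(2^k-1)(2^{k-1}-1)}{3}$ distinct lines; and the disjointness of the $\Delta(X_r)$ guarantees the cyclic shift classes of lines arising from different $X_r$ are disjoint, so the union over all $r$ and all shifts is disjoint. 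The arithmetic identity then shows this disjoint union is \emph{all} lines, which is exactly the Steiner condition. (The Frobenius closure is then a harmless extra symmetry, folded in via Lemma~\ref{lem:equiv}.)
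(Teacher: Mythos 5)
Your closing paragraph --- take only the cyclic shifts of the $X_r$; within one orbit the $\frac{(2^k-1)(2^{k-1}-1)}{3}\cdot(2^n-1)$ two-dimensional subspaces are distinct because $X_r$ is complete; subspaces coming from different orbits are distinct because the $\Delta(X_r)$ are disjoint; and the identity $\frac{2^n-2}{(2^k-1)(2^k-2)}\cdot\frac{(2^k-1)(2^{k-1}-1)}{3}\cdot(2^n-1)=\frac{(2^n-1)(2^n-2)}{6}$ then forces this disjoint union to be the set of \emph{all} two-dimensional subspaces --- is precisely the argument this theorem rests on (the paper leaves it implicit, as a consequence of the preceding lemma). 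Had you stopped there, the proof would be correct and essentially the paper's.

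The genuine flaw is your definition of $\dS$ as the orbit under \emph{both} maps, $\dS=\{\Phi_j(\Upsilon_\ell(X_r))\}$, and the parenthetical claim that ``the Frobenius closure is a harmless extra symmetry.'' Theorem~\ref{thm:cyclic} assumes only completeness and disjointness of the plain difference sets $\Delta(X_r)$, not of the coset difference sets $\rho(\Delta(X_r))$. The subspace $\Upsilon_\ell(X_r)$ has difference set $2^\ell\Delta(X_r)\pmod{2^n-1}$, which under the stated hypotheses may meet some $\Delta(X_s)$ without $\Upsilon_\ell(X_r)$ being a cyclic shift of $X_s$; in that case a two-dimensional subspace lies in two distinct members of your $\dS$, and the Steiner condition fails. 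Lemma~\ref{lem:equiv} only says the two maps generate a well-behaved group; it does not make the extra blocks consistent with the covering. Your own count of incidences, $(2^n-1)\cdot m\cdot\frac{(2^k-1)(2^{k-1}-1)}{3}$, tacitly counts only the shift orbits and ignores the additional Frobenius images, so it does not certify the set you actually defined. The fix is simply to drop the Frobenius map from this construction --- it yields the \emph{cyclic} Steiner structure the paper describes after the theorem --- and reserve the Frobenius closure for Theorem~\ref{thm:pairwise}, whose coset-completeness hypothesis is exactly what makes that closure legitimate.
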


A Steiner structure derived via Theorem~\ref{thm:cyclic} has an automorphism which
consists of a cycle of length $2^n-1$. Such a Steiner system will be called cyclic.
Related codes in the Grassmannian which have automorphism of size $2^n-1$
are called cyclic codes. Such codes were considered in~\cite{EV11,KoKu}.
The search for such large codes become time consuming and less efficient and
hence another tool to simplify the search (on the expense of a more
structured code) should be added.
Therefore, we try to add one more automorphism group into the structure
by adding the Frobenius mappings into the equation.

For a given $k$-dimensional subspace
$$X=\{ {\bf 0} , \alpha^{i_1}, \alpha^{i_2} , \ldots , \alpha^{i_{2^k-1}} \}$$
of $\F_2^n$ let the \emph{coset difference set} of $X$, $\rho(\Delta (X))$,
be the set of integers defined by
$$
\rho(\Delta (X)) \deff \{ \rho (i_r - i_s) ~:~  1 \leq r,~s \leq 2^k-1,~ r \neq s \}~.
$$

A $k$-dimensional subspace $X$ of $\F_2^n$ will be called \emph{coset complete}
if $| \rho(\Delta (X))| =(2^k-1)(2^k-2)$. Two coset complete $k$-dimensional subspaces $X,~Y$ of $\F_2^n$
will be called \emph{disjoint coset complete} if $\rho (\Delta (X)) \cap \rho (\Delta (Y)) = \varnothing$.

\begin{theorem}
\label{thm:pairwise}
If $n$ is a prime and
there exist $\frac{2^n-2}{(2^k-1)(2^k-2)n}$ pairwise disjoint coset complete
$k$-dimensional subspaces then there exists a Steiner structure
$\dS_2[2,k,n]$.
\end{theorem}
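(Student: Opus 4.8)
The goal is to show that a collection of $\frac{2^n-2}{(2^k-1)(2^k-2)n}$ pairwise disjoint coset complete $k$-dimensional subspaces yields a Steiner structure $\dS_2[2,k,n]$. The natural candidate for $\dS$ is the orbit of the given subspaces under the group generated by the cyclic shift $\Phi_1$ and the Frobenius map $\Upsilon_1$. So first I would fix representatives $X_1,\dots,X_N$ (with $N=\frac{2^n-2}{(2^k-1)(2^k-2)n}$) that are pairwise disjoint coset complete, and define
$$
\dS \deff \{ \Phi_j(\Upsilon_\ell(X_m)) : 1 \le m \le N,\ 0 \le \ell \le n-1,\ 0 \le j \le 2^n-2 \}.
$$
By Lemma~\ref{lem:equiv} and its corollary this is a genuine union of equivalence classes, so it is closed under both maps. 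Each element is a $k$-dimensional subspace of $\F_2^n$ (the maps preserve dimension, since $\Upsilon_\ell$ is a field automorphism and $\Phi_j$ is multiplication by $\alpha^j$, both $\F_2$-linear).

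**Counting.** The heart of the argument is a double count of pairs $(T, Z)$ where $T$ is a $2$-dimensional subspace, $Z \in \dS$, and $T \subseteq Z$; I want to show every $T$ is covered, and then a cardinality check forces ``exactly once.'' The key structural fact is that the $2$-dimensional subspaces of $\F_2^n$ correspond, via the difference-set description, to the cyclotomic cosets of size dividing whatever is relevant: a $2$-dimensional subspace $\{0,\alpha^a,\alpha^b,\alpha^c\}$ (with $\alpha^a+\alpha^b=\alpha^c$) is determined up to the joint action of $\Phi$ and $\Upsilon$ by the coset representative $\rho(b-a)$ (or more precisely the pair of coset differences among $a,b,c$, which turn out to be determined by one of them). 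I would first establish that the number of orbits of $2$-dimensional subspaces under the $\Phi,\Upsilon$-action equals $\frac{(2^n-1)(2^{n-1}-1)/3}{(2^n-1)n / n}$ — i.e. the total number $(2^n-1)(2^{n-1}-1)/3$ of $2$-subspaces divided by the generic orbit size $(2^n-1) \cdot$ (something), using $\gcd(3, 2^n-1)=1$ which holds since $n$ is odd (being an odd prime; note $2^n-1$ is not divisible by $3$ when $n$ is odd). Being coset complete means $X_m$'s $\frac{(2^k-1)(2^{k-1}-1)}{3}$ internal $2$-subspaces lie in distinct orbits, and the disjointness across $m$ means all $N \cdot \frac{(2^k-1)(2^{k-1}-1)}{3}$ of them are in distinct orbits. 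The count $N \cdot \frac{(2^k-1)(2^{k-1}-1)}{3}$ should then equal the total number of orbits of $2$-subspaces — this is exactly what the arithmetic identity $N = \frac{2^n-2}{(2^k-1)(2^k-2)n}$ encodes, after clearing the factor-of-$3$ relating $(2^k-1)(2^k-2)$ to $3\cdot\frac{(2^k-1)(2^{k-1}-1)}{3}\cdot\frac{?}{?}$ — I'd double-check that $(2^k-1)(2^k-2) = 2(2^{k-1}-1)(2^k-1)$, which is true, so $\frac{(2^k-1)(2^{k-1}-1)}{3}$ cosets per subspace times $N$ times $n$ (coset size) times... the bookkeeping needs to land on $(2^n-1)(2^{n-1}-1)/3$, the total $2$-subspace count. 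Once the orbit counts match and the $X_m$-orbits are disjoint and cover all orbits, every $2$-subspace $T$ lies in the $\Phi,\Upsilon$-orbit of exactly one internal $2$-subspace of exactly one $X_m$; applying the appropriate $\Phi_j\Upsilon_\ell$ carries that $X_m$ to an element of $\dS$ containing $T$, giving existence.

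**Uniqueness and the main obstacle.** For uniqueness I would argue by the cardinality identity: $\dS$ contains at most $N(2^n-1)n$ subspaces (possibly fewer if there are coincidences, but coset completeness plus $\gcd(2^k-1,2^n-1)=1$ should rule those out, as in the earlier lemmas on distinct cyclic shifts), each containing $\frac{(2^k-1)(2^{k-1}-1)}{3}$ two-dimensional subspaces, so the total number of (block, $2$-subspace)-incidences is at most $N(2^n-1)n\cdot\frac{(2^k-1)(2^{k-1}-1)}{3}$, which by the arithmetic identity equals $(2^n-1)(2^{n-1}-1)/3$, the number of $2$-subspaces; since each is covered at least once, each is covered exactly once, and no two blocks in $\dS$ share a $2$-subspace. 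I expect the main obstacle to be the precise correspondence between $2$-dimensional subspaces and cyclotomic cosets: one must verify that a $2$-subspace is determined by (and conversely determines) a single coset representative $\rho(\cdot)$ of a difference, so that ``disjoint coset difference sets'' genuinely translates into ``orbit-disjoint sets of $2$-subspaces.'' This requires checking that for $\{0,\alpha^a,\alpha^b,\alpha^c\}$ with $a+b+c$ not freely independent (the relation $\alpha^a+\alpha^b+\alpha^c=0$), the three pairwise coset differences $\rho(b-a),\rho(c-b),\rho(c-a)$ either all coincide under the action or are rigidly linked — and that the action of $\Upsilon_\ell$ multiplies all differences by $2^\ell$ (hence preserves coset representatives) while $\Phi_j$ fixes all differences. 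That compatibility is what makes the whole scheme work, and pinning it down carefully is the crux; the rest is the arithmetic identity and a clean double count.
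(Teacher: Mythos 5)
Your proposal takes essentially the same route as the paper, which states this theorem without a separate formal proof and relies on exactly the machinery you reconstruct: form the orbit of the $N=\frac{2^n-2}{(2^k-1)(2^k-2)n}$ representatives under the cyclic-shift and Frobenius maps, use that $\Phi_j$ preserves exponent differences while $\Upsilon_\ell$ multiplies them by $2^\ell$ (so each $2$-dimensional subspace is tied to a cyclotomic-coset group, and for prime $n$ all nonzero cosets have size $n$), and finish with the incidence-counting identity. The crux you flag does go through by a short alignment argument — pick $\ell$ to match the difference cosets and $j$ to match a basepoint, and note that a $2$-subspace of $\F_2^n$ is $\{0,u,v,u+v\}$ for any two of its nonzero vectors — so your coverage-plus-double-count argument is sound and matches the paper's intended justification.
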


A search for pairwise disjoint
coset complete $k$-dimensional subspaces of $\F_2^n$ is done as follows.
$\{ {\bf 0}, \alpha^{i_1} , \alpha^{i_2}, \alpha^{i_3} \}$
is a two-dimensional subspace if and only if
$\alpha^{i_1} + \alpha^{i_2} + \alpha^{i_3}=0$. Also,
$\{ {\bf 0}, \alpha^{i_1} , \alpha^{i_2}, \alpha^{i_3} \}$
is a two-dimensional subspace if and only if
$\{ {\bf 0}, \alpha^{i_1+j} , \alpha^{i_2+j}, \alpha^{i_3+j} \}$
is a two-dimensional subspace for every integer $j$. Therefore,
$\rho (i_2-i_1)$, $\rho (i_1-i_2)$, $\rho (i_3-i_1)$, $\rho (i_1-i_3)$,
$\rho (i_3-i_2)$, $\rho (i_2-i_3)$, always appear together in
a coset difference set. It follows that we can partition the
$\frac{2^n-2}{n}$ cyclotomic cosets of size $n$,
into $\frac{2^n-2}{6n}$ groups and instead of $(2^k-1)(2^k-2)$ integers in a coset
difference set we should consider only $\frac{(2^k-1)(2^{k-1}-1)}{3}$ such integers
representing such a $k$-dimensional subspace.
We form a graph $G(V,E)$ as follows. The set $V$ of vertices
for $G$ are represented by $\frac{(2^k-1)(2^{k-1}-1)}{3}$-subsets
of the set of $\frac{2^n-2}{6n}$ elements
which represents the $\frac{2^n-2}{6n}$ groups of the cosets.
Such a $\frac{(2^k-1)(2^{k-1}-1)}{3}$-subset
$v$ represents a vertex if and only if there exists a $k$-dimensional
subspace $X$ of $\F_2^n$ whose coset difference set $\rho (\Delta(X)))$
is represented by $v$. For two vertices
$v_1,v_2 \in V$ represented by $\frac{(2^k-1)(2^{k-1}-1)}{3}$-subsets, there is an edge
$\{ v_1 , v_2 \} \in E$ if and only if $v_1 \cap v_2 = \varnothing$.
A clique with $m$ vertices in $G$ represents $m$ pairwise disjoint
coset complete $k$-dimensional subspaces. A clique with
$\frac{2^n-2}{(2^k-1)(2^k-1)n}$ vertices represents a Steiner structure
$\dS_2 [2,k,n]$.

\section{Steiner Structure $\dS_2[2,3,13]$}
\label{sec:S2_3_13}

A program which generates this graph for $n=13$ and $k=3$ was written.
For $n=13$ we have $\frac{2^{13}-2}{13} =630$ cyclotomic
cosets of size 13, and 105 groups.
A clique of size $\frac{2^n-2}{42n} =15$ in this graph represents a Steiner
structure $\dS_2[2,3,13]$. Unfortunately, it is not feasible
to check even a small fraction of the subsets with 15 vertices
of this graph and we found numerous number of cliques of size 14.
In the conference on "Trends in Coding Theory"
(Ascona, October 28, 2012 - November 2, 2012) Patric \"{O}sterg{\aa}rd
suggested to use the exact cover problem (to find 15 vertices with
disjoint integers for a total of 105 integers) to find a solution.
He pointed on a program in http://www.cs.helsinki.fi/u/pkaski/libexact/
based on the backtrack algorithm and the dancing links data structure of
Knuth~\cite{Knu00}. We added this algorithm into our search. We formed
GF($2^{13}$) with the primitive element
$\alpha$ which is a root of the primitive polynomial
$x^{13}+x^4+x^3+x+1=0$.
The following fifteen pairwise disjoint coset complete
3-dimensional subspaces given in Table~\ref{tab:solution}
form by Theorem~\ref{thm:pairwise} the required $\dS_2[2,3,13]$.

\begin{table}[ht]
\label{tab:solution}
\centering
\begin{large}
\begin{tabular}{|c|c|}
\hline $~$ &   representative
\tabularnewline \hline \hline
& \\
1 &  $\{ {\bf 0},\alpha^0, \alpha^1 , \alpha^{1249}, \alpha^{5040} , \alpha^{7258}, \alpha^{7978}, \alpha^{8105} \}$
\tabularnewline \hline
& \\
2 &  $\{ {\bf 0},\alpha^0, \alpha^7 , \alpha^{1857} , \alpha^{6681} , \alpha^{7259} , \alpha^{7381} , \alpha^{7908} \}$
\tabularnewline \hline
& \\
3 &  $\{ {\bf 0},\alpha^0, \alpha^9 , \alpha^{1144} , \alpha^{1945} , \alpha^{6771} , \alpha^{7714} , \alpha^{8102} \}$
\tabularnewline \hline
& \\
4 &  $\{ {\bf 0},\alpha^0, \alpha^{11}, \alpha^{209}, \alpha^{1941}, \alpha^{2926}, \alpha^{3565} , \alpha^{6579} \}$
\tabularnewline \hline
& \\
5 &  $\{ {\bf 0},\alpha^0, \alpha^{12} , \alpha^{2181}, \alpha^{2519} , \alpha^{3696} , \alpha^{6673}, \alpha^{6965} \}$
\tabularnewline \hline
& \\
6 &  $\{ {\bf 0},\alpha^0, \alpha^{13}, \alpha^{4821} , \alpha^{5178}, \alpha^{7823} , \alpha^{8052} , \alpha^{8110} \}$
\tabularnewline \hline
& \\
7 &  $\{ {\bf 0},\alpha^0, \alpha^{17} , \alpha^{291}, \alpha^{1199} , \alpha^{5132} , \alpha^{6266}, \alpha^{8057} \}$
\tabularnewline \hline
& \\
8 &  $\{ {\bf 0},\alpha^0, \alpha^{20} , \alpha^{1075} , \alpha^{3939}, \alpha^{3996} , \alpha^{4776}, \alpha^{7313} \}$
\tabularnewline \hline
& \\
9 &  $\{ {\bf 0},\alpha^0, \alpha^{21} , \alpha^{2900} , \alpha^{4226} , \alpha^{4915} , \alpha^{6087} , \alpha^{8008} \}$
\tabularnewline \hline
& \\
10 &  $\{ {\bf 0},\alpha^0, \alpha^{27}, \alpha^{1190} , \alpha^{3572} , \alpha^{4989}, \alpha^{5199} , \alpha^{6710} \}$
\tabularnewline \hline
& \\
11 &  $\{ {\bf 0},\alpha^0, \alpha^{30}, \alpha^{141} , \alpha^{682} , \alpha^{2024} , \alpha^{6256} , \alpha^{6406} \}$
\tabularnewline \hline
& \\
12 &  $\{ {\bf 0},\alpha^0, \alpha^{31} , \alpha^{814} , \alpha^{1161} , \alpha^{1243} , \alpha^{4434} , \alpha^{6254} \}$
\tabularnewline \hline
& \\
13 &  $\{ {\bf 0},\alpha^0, \alpha^{37} , \alpha^{258} , \alpha^{2093} , \alpha^{4703} , \alpha^{5396} , \alpha^{6469} \}$
\tabularnewline \hline
& \\
14 &  $\{ {\bf 0},\alpha^0, \alpha^{115} , \alpha^{949} , \alpha^{1272} , \alpha^{1580} , \alpha^{4539} , \alpha^{4873} \}$
\tabularnewline \hline
& \\
15 &  $\{ {\bf 0},\alpha^0, \alpha^{119} , \alpha^{490} , \alpha^{5941} , \alpha^{6670}, \alpha^{6812} , \alpha^{7312} \}$
\tabularnewline \hline

\end{tabular}
\end{large}
\caption{The fifteen pairwise disjoint coset complete 3-dimensional subspaces from which a Steiner structure $\dS_2[2,3,13]$ is obtained}
\end{table}

\section{Are there Other Steiner Structures?}
\label{sec:more}

The existence of the Steiner structure $\dS_2[2,3,13]$ provides
some evidence that more Steiner structures exist. But, we certainly
believe that for most  parameters nontrivial Steiner structures do not exist.
We believe that more Steiner structures $\dS_q[2,3,n]$ exist.
\begin{conj}
If $n \equiv 1~(\bmod~6)$, $n \geq 13$, is a prime then there exists
a Steiner structure $\dS_2[2,3,n]$ generated by a set of
$\frac{2^n-2}{42n}$ pairwise disjoint coset complete 3-dimensional
subspaces.
\end{conj}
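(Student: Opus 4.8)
The plan is to reduce, via Theorem~\ref{thm:pairwise}, to exhibiting $\frac{2^n-2}{42n}$ pairwise disjoint coset complete $3$-dimensional subspaces of $\F_2^n$, and then to obtain such a family as a perfect matching in the auxiliary hypergraph built above (equivalently, a clique of the stated size in the disjointness graph $G$). First one checks that the arithmetic is consistent: the multiplicative order of $2$ modulo $21$ equals $6$, so a prime $n\equiv 1\pmod{6}$ gives $21\mid 2^{n-1}-1$, hence $42\mid 2^n-2$, while Fermat's theorem gives $n\mid 2^n-2$; since $\gcd(42,n)=1$ for $n\ge 13$ prime, $42n\mid 2^n-2$ and the target count $\frac{2^n-2}{42n}$ is a positive integer. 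Moreover $\gcd(2^3-1,2^n-1)=2^{\gcd(3,n)}-1=1$, and a coset complete subspace is in particular complete, so the lemmas preceding Theorem~\ref{thm:cyclic} apply: the cyclic shifts of each such subspace form $2^n-1$ distinct $3$-dimensional subspaces whose $2$-dimensional subspaces are pairwise distinct. A $3$-dimensional subspace contains exactly seven $2$-dimensional subspaces, and the six differences arising from any one of these planes together form one of the $\frac{2^n-2}{6n}$ coset groups; hence a coset complete $3$-dimensional subspace corresponds to a $7$-subset of these groups, and $\frac{2^n-2}{42n}$ pairwise disjoint such subspaces are precisely a perfect matching in the $7$-uniform hypergraph $H_n$ whose vertices are the $\frac{2^n-2}{6n}$ coset groups and whose edges are the coset difference sets of the coset complete $3$-dimensional subspaces.

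The work then splits into two parts. The first is a lower bound on the density of $H_n$ together with a bound on its codegrees. I would parametrize $3$-dimensional subspaces by ordered triples of their nonzero points, use that $\{\alpha^a,\alpha^b,\alpha^c\}$ spans a $2$-dimensional subspace iff $\alpha^a+\alpha^b+\alpha^c=0$, and estimate --- by elementary counting, invoking Weil-type character sum bounds only where coincidences have to be ruled out --- the proportion of $3$-dimensional subspaces that are coset complete. I expect this proportion to be bounded below by a positive constant (plausibly even $1-O(\mathrm{poly}(n)\,2^{-n/2})$), so that $H_n$ is very dense --- with a number of edges vastly exceeding its $\Theta(2^n/n)$ vertices --- and, by a similar coincidence count, with codegrees that are small relative to its minimum degree. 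The second and genuinely hard part is to turn ``dense and well-spread'' into an honest perfect matching: a near-perfect matching would follow from the R\"{o}dl nibble / Pippenger--Spencer theorem once the codegree bounds are verified, but completing the matching needs an absorbing structure, and the obstacle is that for $n$ prime the field $\F_{2^n}$ has no proper subfield, so the natural recursive and algebraic constructions of absorbers are unavailable; one is forced to build the absorber probabilistically inside $H_n$ and to control the divisibility conditions governing exact covers of the $\frac{2^n-2}{6n}$ coset groups.

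I expect this last step to be the main obstacle, and it is precisely why the statement is offered as a conjecture rather than a theorem: the $n=13$ instance was resolved by an exact-cover search, not by any structural matching argument, and at present there is no control of the coset-difference map strong enough to force the matching uniformly over the infinite family. A realistic intermediate goal, which I would pursue first, is (i) to verify the conjecture for the next few admissible primes $n\in\{19,31,37,43,\dots\}$ by the same \texttt{libexact}-based search, and (ii) to prove unconditionally that $H_n$ admits a matching covering all but $o\!\left(\frac{2^n-2}{6n}\right)$ of the coset groups; upgrading such a near-matching to a perfect one is the open piece.
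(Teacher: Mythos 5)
The statement you were asked about is a \emph{conjecture} in the paper: the authors give no proof, offering only the single instance $\dS_2[2,3,13]$ of Section~\ref{sec:S2_3_13}, found by an exact-cover search, as supporting evidence. So there is no proof in the paper to compare yours against, and your proposal --- quite properly --- does not claim to supply one. The parts you do carry out are correct and consistent with the paper's framework: the divisibility check ($42n \mid 2^n-2$ for a prime $n\equiv 1 \pmod 6$, $n\geq 13$, via the order of $2$ modulo $21$ and Fermat), the observation that $\gcd(2^3-1,2^n-1)=1$, the identification of a coset complete $3$-dimensional subspace with a $7$-subset of the $\frac{2^n-2}{6n}$ coset groups, and the reformulation of the conjecture as an exact cover (equivalently a perfect matching in a $7$-uniform hypergraph, or a clique of size $\frac{2^n-2}{42n}$ in the disjointness graph) all match Theorem~\ref{thm:pairwise} and the search formulation of Sections~\ref{sec:Steiner} and~\ref{sec:S2_3_13}.

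The genuine gap is the one you name yourself, and it should be stated plainly: the density and codegree estimates for your hypergraph $H_n$ are asserted rather than proved (distinct coset complete subspaces can share coset groups, so codegrees are not obviously small, and the proportion of $3$-dimensional subspaces that are coset complete is not estimated anywhere); and even granting those estimates, the R\"{o}dl nibble / Pippenger--Spencer machinery yields only an almost-perfect matching, while the conjecture demands an exact cover of all $\frac{2^n-2}{6n}$ groups. No absorbing structure is constructed, and, as you observe, for $n$ prime the field $\F_{2^n}$ has no proper subfields, so the usual algebraic or recursive absorbers are unavailable. Your write-up is therefore an honest research program, not a proof; it leaves the statement exactly where the paper leaves it --- open, with $n=13$ as the only verified case --- and your suggested intermediate steps (further exact-cover searches for $n=19,31,\dots$ and an unconditional near-perfect matching result) are reasonable, but they are proposals, not established results.
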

For $q=3$ we have found that there are no Steiner structure
$\dS_3[2,3,7]$ which has the cyclic shift map
and the Frobenius map as generators of its automorphism groups.
We also found that a Steiner structure $\dS_2[2,4,13]$ with
these automorphisms does not exist.

The Steiner structure $\dS_2[2,3,13]$ that we found is not the
only one with these parameters. By using an invertible
linear transformation, defined by a binary $13 \times 13$
invertible matrix, on the set of 3-dimensional subspaces
of $\dS_2 [2,3,13]$ another Steiner structure with the same
parameters will be obtained. Changing the primitive element
or substituting in a solution another primitive element
instead of $\alpha$ will also produce new Steiner structures
with the same parameters. Some of these structures will have
the cyclic shift map and the Frobenius map as
generators for their automorphism groups. Some of them won't.

We believe that most Steiner structures do not have a nice mathematical
structure and hence it is still probable that a Steiner structure
$\dS_2[2,3,7]$ exists. But, it is almost improbable that it will be
found in the near future.

\section{Other Related Designs}
\label{sec:related}

The existence of a Steiner structure implies the existence of
other combinatorial designs. Moreover, if the steiner structure
is cyclic then more designs are derived.

A \emph{Steiner system} $S(t,k,n)$ is
a set $S$
of $k$-subsets of an $n$-set, say $\Z_n$, such that
each $t$-subset of $\Z_n$ is contained in exactly one
subset of $S$. The following result was obtained in~\cite{EtVa11}.

\begin{theorem}
The existence of a Steiner structure $\dS_2 [2,k,n]$ implies the existence of
a Steiner system $S(3,2^k,2^n)$.
\end{theorem}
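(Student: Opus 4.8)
The plan is to exhibit an explicit bijection between the $t$-dimensional subspaces of $\F_2^n$ and an appropriate family of $3$-subsets of a $2^n$-set, and to transport the Steiner property through this bijection. First I would identify the ground set $\Z_{2^n}$ of the target Steiner system $S(3,2^k,2^n)$ with the $2^n$ vectors of $\F_2^n$ (using Lemma~\ref{lem:isomorphism}). The key observation is that a $2$-dimensional subspace of $\F_2^n$ consists of the zero vector together with three nonzero vectors $u,v,w$ satisfying $u+v+w=0$; equivalently, a $2$-dimensional subspace is an affine-closed set of the form $\{x, x+u, x+v, x+w\}$ with $x=0$. More generally, under the identification, a $k$-dimensional (linear) subspace $X$ of $\F_2^n$, when we take all its cosets $X+z$ for $z\in\F_2^n$, yields $2^{n-k}$ disjoint $2^k$-subsets of $\Z_{2^n}$ that partition the ground set, and each such coset is precisely a maximal affine subspace. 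So the natural candidate for the target system $S$ is the collection of all cosets of all $k$-dimensional subspaces appearing in $\dS$.

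The main steps, in order, are: (1) Set $S \deff \{\, X+z : X \in \dS_2[2,k,n],\ z \in \F_2^n \,\}$, viewed as a family of $2^k$-subsets of the $2^n$-set $\F_2^n$; each member indeed has size $2^k$. (2) Show every $3$-subset $\{a,b,c\}$ of $\F_2^n$ lies in \emph{at least} one member of $S$: given $a,b,c$, translate by $a$ to reduce to $\{0, b-a, c-a\}$, observe that $\spn\{b-a, c-a\}$ is a subspace of dimension $2$ (if $b-a, c-a$ are independent) or dimension $1$ (if dependent — but over $\F_2$ three \emph{distinct} elements $0,b-a,c-a$ force independence, so it is exactly $2$-dimensional), then by the defining property of $\dS_2[2,k,n]$ this $2$-dimensional subspace sits inside a unique $X \in \dS$, hence $\{0,b-a,c-a\} \subseteq X$ and $\{a,b,c\} \subseteq X+a \in S$. (3) Show \emph{at most} one member of $S$ contains $\{a,b,c\}$: if $X+z$ and $X'+z'$ both contain $\{a,b,c\}$, translate so that $0 \in$ both, i.e.\ reduce to the case $a=0$; then both $X$ and $X'$ (after the translation, which sends a coset of a linear subspace to another coset, and one of them to the subspace through $0$ — here one must be a little careful) contain the $2$-dimensional subspace $\spn\{b-a,c-a\}$, and uniqueness in $\dS$ forces $X=X'$; recovering $z=z'$ from $0 \in X+z = X'+z'$ (both are the unique coset through $0$, namely $X$ itself). (4) Conclude that $S$ is a Steiner system $S(3,2^k,2^n)$.

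The step I expect to be the main obstacle is step~(3), specifically bookkeeping the translations: a $3$-set $\{a,b,c\}$ contained in a coset $X+z$ does not generally contain $0$, so one cannot directly invoke the Steiner property of $\dS$, which is about \emph{linear} subspaces. The clean way around this is to note that for \emph{any} $a \in \F_2^n$ the $2$-dimensional subspace $Y \deff \spn\{b-a, c-a\}$ is the unique one in $\dS$'s "closure" containing $b-a$ and $c-a$, and that a coset $X+z$ contains $\{a,b,c\}$ if and only if the linear subspace $X$ contains $Y$ (because $X+z \ni a,b,c \iff X \ni b-a, c-a \iff X \supseteq Y$, using that $X$ is linear) — and then $z$ is forced by $a \in X+z$. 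Thus both the existence and uniqueness statements collapse to the single fact that $Y$ lies in exactly one $X \in \dS$, which is exactly the hypothesis. Once this translation-invariance is set up correctly, the remaining verifications are routine counting over $\F_2$, using $\gcd$-free cardinality identities such as $|X+z| = 2^k$ and the fact that distinct nonzero elements of $\F_2^n$ are automatically $\F_2$-independent in pairs.
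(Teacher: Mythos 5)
Your proof is correct, and it is exactly the standard argument behind this theorem: the paper itself gives no proof here but cites~\cite{EtVa11}, where the Steiner system $S(3,2^k,2^n)$ is obtained precisely as you do, by taking as blocks all cosets $X+z$ of the subspaces $X$ in the structure and using that over $\F_2$ any three distinct points $a,b,c$ determine the $2$-dimensional subspace spanned by $b-a$ and $c-a$, which lies in a unique block of $\dS_2[2,k,n]$. Your handling of the translation bookkeeping in step~(3), via the equivalence ``$\{a,b,c\}\subseteq X+z$ iff $X$ contains $b-a,c-a$ and the coset is $X+a$,'' is exactly the right way to make existence and uniqueness both follow from the single hypothesis.
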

\begin{cor}
There exists a Steiner system $S(3,8,8192)$.
\end{cor}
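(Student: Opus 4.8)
The plan is to exhibit an explicit map from the $q$-analog world to the classical set-theoretic world via the \emph{subset representation} of subspaces. The key idea is that a $k$-dimensional subspace $X$ of $\F_2^n$ is a set of $2^k$ vectors, and if we fix an identification $\F_2^n \cong \Z_2^n$, then $X$ becomes a particular $2^k$-subset of the $2^n$-set $\Z_2^n$. So the first step is to take the Steiner structure $\dS = \dS_2[2,k,n]$, regard each of its $k$-dimensional subspaces $X$ as a $2^k$-subset $\widehat{X}$ of the $2^n$-set $V := \F_2^n$, and let $S := \{\widehat X : X \in \dS\}$. I claim $S$ is a Steiner system $S(3, 2^k, 2^n)$ on the ground set $V$.

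The heart of the argument is the following correspondence: a $3$-subset $\{u,v,w\}$ of $V$ is contained in $\widehat X$ for some $X \in \dS$ if and only if a certain well-chosen $2$-dimensional subspace is contained in $X$. The natural choice: given distinct $u,v,w \in \F_2^n$, translate by $u$ to get $\{0, v-u, w-u\}$; this spans a subspace $T_{u,v,w}$ of dimension $1$, $2$, or (if one of $v-u, w-u$ is zero or they coincide) less, but for distinct $u,v,w$ with $u+v+w \ne 0$ we get a $2$-dimensional subspace only when $\{0, v-u, w-u, v-u+(w-u)\} = \{0,v-u,w-u,v+w\}$ — which requires $u+v+w=0$. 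Hence one must be careful: a generic $3$-subset does \emph{not} sit inside any affine-closed set of size $4$. The correct statement, and the one proved in \cite{EtVa11}, is that one should look at the affine structure: the $2^k$-subsets $\widehat X$ are precisely the cosets... no — rather, $\widehat{X}$ is a subspace through $\mathbf 0$, but its translates are not in $S$. So the right move is to pass to \emph{all} affine subspaces: note that each $2$-dimensional (linear) subspace $U \le \F_2^n$, together with $\mathbf 0$, is a $4$-set, and a $3$-subset $\{u,v,w\}$ with $u+v+w=0$ spans such a $U$ (namely $U = \{0,u,v,w\}$ with $w = u+v$), while a $3$-subset with $u+v+w \ne 0$ lies in a unique affine plane $\{u,v,w,u+v+w\}$, which is a coset of the $2$-dimensional subspace $\langle u+v, u+w\rangle$. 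Thus every $3$-subset of $V$ determines a unique "affine line/plane" structure, and I would set up the bijection at the level of \emph{affine} $2$-flats, using that $\dS$ together with all its translates tiles the affine $t$-flats.

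Concretely, the steps I would carry out are: (1) Fix $V = \F_2^n$ as the $2^n$-element ground set and recall $|\widehat X| = 2^k$ for each $X \in \dS$. (2) Establish the counting identity: the number of $3$-subsets of a $2^k$-dimensional ... of $\widehat X$ that are "collinear" in the relevant affine sense equals the number of $2$-flats inside the corresponding affine structure, and check $\binom{2^k}{3}$-type counts match $\binom{2^n}{3}$ divided by the replication number, so that the parameters of a putative $S(3,2^k,2^n)$ are consistent. (3) Prove the covering property: every $3$-subset $\{u,v,w\} \subseteq V$ lies in at least one $\widehat X$, by reducing (via a translation, and using that $\dS$ is a Steiner structure so some $X \in \dS$ contains the $2$-dimensional subspace $\langle v-u, w-u\rangle$ — handling separately the degenerate case where $v-u,w-u$ are dependent, i.e. $u,v,w$ lie on a common $1$-dimensional affine line) and then translating back; here one uses that translates of members of $\dS$ are still $2^k$-subsets containing the translated $3$-set. (4) Prove uniqueness: if $\{u,v,w\}$ were in two distinct $\widehat X, \widehat Y$ with $X,Y \in \dS$, pull back to see the $2$-subspace $\langle v-u,w-u\rangle$ lies in two distinct members of $\dS$, contradicting the Steiner property. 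The corollary $S(3,8,8192)$ is then immediate by substituting $q=2$, $k=3$, $n=13$ into $S(3,2^k,2^n)$, invoking the existence of $\dS_2[2,3,13]$ established in Section~\ref{sec:S2_3_13}.

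The main obstacle, and the step that needs the most care, is making the "affine $2$-flat" correspondence precise so that covering and uniqueness hold simultaneously: a linear $2$-subspace of $\F_2^n$ is a $4$-set $\{0,a,b,a+b\}$ containing four distinct collinear (in the combinatorial design sense) $3$-subsets, and one must verify that each of the $\binom{2^n}{3}$ triples of $V$ is "used" exactly once across all of $\dS$ — equivalently, that the map sending a triple to its governing $2$-flat, composed with the Steiner property of $\dS$, is a bijection onto the blocks. This is exactly the content of the cited theorem from \cite{EtVa11}, so in the write-up I would reproduce that bijection argument in detail; everything else is routine bookkeeping with the parameters.
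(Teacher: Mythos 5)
Your final step---substituting $k=3$, $n=13$ into $S(3,2^k,2^n)$ and invoking the $\dS_2[2,3,13]$ constructed in Section~\ref{sec:S2_3_13}---is exactly what the paper does: the implication ``$\dS_2[2,k,n]$ exists $\Rightarrow$ $S(3,2^k,2^n)$ exists'' is quoted as a known theorem from \cite{EtVa11} and not reproved. The genuine gap is in your attempted reconstruction of that implication. You define the block set as $S=\{\widehat X : X\in\dS\}$, i.e.\ only the $2^k$-subsets coming from the subspaces themselves (you explicitly say ``its translates are not in $S$''). That set is not an $S(3,2^k,2^n)$: it has only $|\dS|$ blocks, whereas the design requires $\binom{2^n}{3}/\binom{2^k}{3}=2^{n-k}\,|\dS|$ blocks, and a triple $\{u,v,w\}$ avoiding $\mathbf 0$ need not lie in any subspace of $\dS$. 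The correct block set consists of \emph{all} cosets $X+z$ with $X\in\dS$, $z\in\F_2^n$; your covering step (3) silently uses such translates, contradicting your definition of $S$, while your uniqueness step (4) is argued only for the untranslated blocks $\widehat X,\widehat Y$ and therefore does not establish uniqueness for the actual block set.

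A second, related confusion: your claim that $\{0,v-u,w-u\}$ spans a $2$-dimensional subspace ``only when $u+v+w=0$'' is false. Over $\F_2$ any two distinct nonzero vectors are linearly independent, so for distinct $u,v,w$ the translated set $\{0,v+u,w+u\}$ always spans the $2$-dimensional subspace $\{0,v+u,w+u,v+w\}$; equivalently, every $3$-subset of $\F_2^n$ lies in the unique affine plane $\{u,v,w,u+v+w\}$. This elementary fact is precisely what makes the derived-design argument work: the unique $X\in\dS$ containing $\langle v+u,\,w+u\rangle$ gives the unique block $X+u\supseteq\{u,v,w\}$, and if $\{u,v,w\}\subseteq Y+z$ then $u-z\in Y$ forces $Y+z=Y+u$ and $\langle v+u,\,w+u\rangle\subseteq Y$, whence $Y=X$. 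With the block set corrected to all cosets and the argument phrased this way, your sketch becomes the standard proof of the quoted theorem; as written, however, it is internally inconsistent and does not yet constitute a proof.
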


An $(n,w,\lambda)$ \emph{difference family}
is a set of $w$-subsets with elements taken
from an additive group $G$, $|G|=n$, such that each element
of $G \setminus \{ 0 \}$ appears exactly $\lambda$ times
as a difference between elements $w$-subsets.
There is an extensive literature on difference families,
e.g.~\cite{BiEt95,Wil72}. The following theorem
is easily verified.

\begin{theorem}
\label{thm:diffF}
If there exists a Steiner structure $\dS_2 [2,k,n]$ formed
by $\frac{2^n-2}{(2^k-1)(2^k-2)n}$ pairwise disjoint coset complete
$k$-dimensional subspaces,
then there exist a $(2^n-1,2^k-1,1)$ difference family, where the elements
are taken from the group $\Z_{2^n-1}$.
\end{theorem}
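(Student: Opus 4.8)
The plan is to build the difference family explicitly out of the $\frac{2^n-2}{(2^k-1)(2^k-2)n}$ base subspaces together with all of their Frobenius images, and then to verify the difference property by a surjectivity-and-counting argument that uses only the defining property of the Steiner structure. Let $\cL$ be the given set of $\frac{2^n-2}{(2^k-1)(2^k-2)n}$ pairwise disjoint coset complete $k$-dimensional subspaces, so that $\dS = \{\Phi_j(\Upsilon_\ell(X)) : X \in \cL,\ 0 \le j \le 2^n-2,\ 0 \le \ell \le n-1\}$. For $X = \{{\bf 0}, \alpha^{i_1}, \ldots, \alpha^{i_{2^k-1}}\} \in \cL$ and $0 \le \ell \le n-1$, let $B_{X,\ell} = \{\, i_t 2^\ell \bmod (2^n-1) : 1 \le t \le 2^k-1 \,\} \subseteq \Z_{2^n-1}$ be the exponent set of $\Upsilon_\ell(X)$; since $\gcd(2^\ell,2^n-1)=1$ this is a $(2^k-1)$-subset. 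I take the difference family to be $\cB = \{ B_{X,\ell} : X \in \cL,\ 0 \le \ell \le n-1 \}$, which has $n \cdot \frac{2^n-2}{(2^k-1)(2^k-2)n} = \frac{2^n-2}{(2^k-1)(2^k-2)}$ blocks; as cyclic shifts preserve the multiset of differences, each $B_{X,\ell}$ may be thought of as a representative of a full cyclic class of $k$-subspaces of $\dS$.

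Each block of $\cB$ yields $(2^k-1)(2^k-2)$ ordered differences, so the total number of ordered differences produced by $\cB$ is $\frac{2^n-2}{(2^k-1)(2^k-2)} \cdot (2^k-1)(2^k-2) = 2^n-2 = |\Z_{2^n-1}\setminus\{0\}|$. Hence it suffices to prove that the map sending a (block, ordered pair of distinct entries) to the corresponding difference in $\Z_{2^n-1}\setminus\{0\}$ is surjective; a surjection between finite sets of equal size is a bijection, which gives the difference-family property with $\lambda=1$ (and, a posteriori, that the blocks of $\cB$ are distinct).

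For surjectivity, fix a nonzero $d \in \Z_{2^n-1}$. The elements $\alpha^0$ and $\alpha^d$ are distinct nonzero vectors of $\F_2^n$, so they span a $2$-dimensional subspace $L$, and by the Steiner property there is a unique $Y \in \dS$ with $L \subseteq Y$. Writing $Y = \Phi_j(\Upsilon_\ell(X))$ for some $X \in \cL$ and some $j,\ell$ (possible since $\dS$ is the union of the orbits of the members of $\cL$, cf.\ Lemma~\ref{lem:equiv}), the nonzero points of $Y$ are $\alpha^{i_t 2^\ell + j}$, $1 \le t \le 2^k-1$. From $\alpha^0, \alpha^d \in L \subseteq Y$ we get indices $r \neq s$ with $i_r 2^\ell + j \equiv 0$ and $i_s 2^\ell + j \equiv d \pmod{2^n-1}$, and subtracting yields $(i_s 2^\ell) - (i_r 2^\ell) \equiv d$. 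Since $i_s 2^\ell$ and $i_r 2^\ell$ both lie in $B_{X,\ell}$, the element $d$ occurs as an ordered difference in the block $B_{X,\ell}$, as required.

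I expect the only real work to be bookkeeping: making the cardinality count exactly tight so that surjectivity upgrades to bijectivity, checking that the passage from a subspace to its exponent set is well defined modulo $2^n-1$ and size-preserving, and confirming that the unique $Y \in \dS$ covering $L$ is genuinely a Frobenius-and-shift image of some base subspace in $\cL$. The coset completeness and pairwise disjointness hypotheses are exactly what make the construction of $\dS$ (hence of $\cB$) legitimate; once $\dS$ is available, the theorem is essentially a reformulation of ``every $2$-dimensional subspace is contained in exactly one member of $\dS$'' in the language of differences over $\Z_{2^n-1}$.
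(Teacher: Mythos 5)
Your proposal is correct, and it should be said up front that the paper offers no argument at all for this theorem (it is dismissed as ``easily verified''), so there is no written proof to match against. Your construction of the family is surely the intended one: the blocks are the exponent sets of the base subspaces in $\cL$ together with their Frobenius images, $n\cdot\frac{2^n-2}{(2^k-1)(2^k-2)n}=\frac{2^n-2}{(2^k-1)(2^k-2)}$ blocks in all. Where you differ from the presumably intended verification is in what you use to check the $\lambda=1$ property: you invoke the Steiner covering property of $\dS$ (every $2$-dimensional subspace, in particular $\spn\{\alpha^0,\alpha^d\}$, lies in exactly one block of $\dS$) together with the count of ordered differences, upgrading surjectivity to bijectivity; this is clean and complete, and it correctly handles the factorization $Y=\Phi_j(\Upsilon_\ell(X))$ via Lemma~\ref{lem:equiv}. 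A more self-contained route uses the hypotheses directly and never mentions the Steiner property: coset completeness says the $(2^k-1)(2^k-2)$ ordered differences of a base block lie in pairwise distinct cyclotomic cosets, each of size $n$ (since $n$ is prime); the differences of $B_{X,\ell}$ are $2^\ell\cdot\Delta(X)$, so as $\ell$ runs over $0,\dots,n-1$ each such coset is swept exactly once; pairwise disjointness of the coset difference sets keeps contributions of different base subspaces disjoint, and the count $|\cL|\cdot(2^k-1)(2^k-2)\cdot n=2^n-2$ shows every nonzero residue of $\Z_{2^n-1}$ occurs exactly once. Your approach buys economy (only the Steiner property is needed, so it would apply to any Steiner structure built from such orbits), while the direct approach makes visible exactly why the two hypotheses of the theorem --- coset completeness and disjointness --- are what drive the conclusion. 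One small bookkeeping point, which you already flag: until bijectivity is established one should treat $\cB$ as a family indexed by the pairs $(X,\ell)$ rather than as a set, since the count of $2^n-2$ ordered differences presupposes $n|\cL|$ blocks; bijectivity then yields distinctness of the blocks a posteriori, as you say.
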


Finally, Steiner structures are also \emph{diameter perfect codes} in
the Grassmann scheme~\cite{AAK}. Hence, the new Steiner structure $\dS_2 [2,k,n]$
is a diameter perfect code in the Grassmann scheme.

\section{Conclusion and Future Work}
\label{sec:conclude}

We have presented a framework for a possible
structure of Steiner structures. This framework involves two mappings,
the Frobenius map and the cyclic shift map. These mappings
form automorphism groups in the system. A Steiner structure
$\dS_2[2,3,13]$ was found by computer search based on this frame.
This is the first known nontrivial Steiner structure.
We believe that more Steiner structures exist and for future work we
propose to find more by computer search and to provide a construction for
an infinite family of such structures. One intriguing question is whether
such a structure can be formed from a known construction of a difference family.

\vspace{1cm}

\noindent
{\bf Note added:} This is a preliminary version; an extended version will be
submitted soon. Part of this work appears in an early version~\cite{EtVa12}, which covers all
the material presented on November 1, 2012, in the conference "Trends in Coding Theory",
Ascona, switzerland, October 28, 2012 - November 2, 2012.

\vspace{1cm}

\section*{Acknowledgment}

We started our computer search for a Steiner structure $\dS_2[2,3,13]$
by trying to find a clique~of~size $15$ in the graph on the $105$ cosets
(cf.\ Section\,4).
However, using a backtracking algorithm, we were able to search only
a small fraction of this graph. On October\,30, 2012,
during the conference ``Trends in Coding Theory'' held
in Ascona, Switzerland, Patric \"{O}sterg{\aa}rd suggested
that it would be better to search for an {exact cover}
instead, and pointed out existing algorithms for the exact
cover problem. Following this suggestion, we have changed our programs
and immediately found a~$\dS_2[2,3,13]$ Steiner structure.
We are deeply indebted to Patric \"{O}sterg{\aa}rd; without
his advice, it is not clear whether we would have been able to
complete this work.

Based upon the method presented in Sections 2 and 3 of this paper,
along with \"{O}sterg{\aa}rd's suggestion to look for an exact cover,
Braun and Wasser\-man~\cite{BrWa12} ran a computer search
and independently found numerous instances of $\dS_2[2,3,13]$.



\begin{thebibliography}{99}
\bibitem{AAK}
    {\sc R. Ahlswede, H. K. Aydinian, and L. H. Khachatrian,}
    {\sl On perfect codes and related concepts,}
    {\em Designs, Codes, and Cryptography,} 22 (2001), 221--237.
\bibitem{BiEt95}
    {\sc  S. Bitan and T. Etzion},
    {\sl Constructions for Optimal Constant Weight
       Cyclically Permutable Codes and Difference Families,}
    {\em IEEE Trans.\ Inform.\ Theory,}
     IT-41 (1995), 77--87.
\bibitem{BrWa12}
    {\sc M. Braun and A. Wassermann,}
    {\sl $q$-Steiner systems do exist,} November 2012,
    preprint.
\bibitem{EtVa11}
     {\sc T. Etzion and A. Vardy,}
     {\sl On $q$-Analogs for Steiner Systems and Covering Designs,}
     {\em Advances in Mathematics of Communications}, 5 (2011), 161--176.
\bibitem{EtVa12}
     {\sc T. Etzion and A. Vardy,}
     {\sl Automorphisms of codes in the Grassmann scheme,}
     {\em arxiv.org/abs/1210.5724}, October 2012.
\bibitem{EV11}
    {\sc T. Etzion and A. Vardy,}
    {\sl Error-correcting codes in projective space,}
    {\em IEEE Trans. Inform. Theory,} IT-57 (2011), 1165--1173.
\bibitem{Knu00}
     {\sc Donald E. Knuth,}
      {\sl Dancing Links,}
      {\em Millennial Perspectives in Computer Science}
      (J. Davies, B. Roscoe, and J. Woodcock, Eds.), Palgrave, Basingstoke, England, 2000, pp. 187–214
\bibitem{KoKs}
    {\sc R. Koetter and F. R. Kschischang,}
    {\sl Coding for errors and erasures in random network coding,}
    {\em IEEE Trans. on Inform. Theory},  54 (2008), 3579--3591.
\bibitem{KoKu}
    {\sc A.\,Kohnert and S.\,Kurz,}
    {\sl Construction of large constant-dimension~codes with a prescribed minimum distance,}
    {\em Lecture Notes in Computer Science}, 5393 (2008), 31--42.
\bibitem{ScEt02}
    {\sc M. Schwartz and T. Etzion,}
    {\sl Codes and anticodes in the Grassman graph,}
    {\em Journal Combinatorial Theory, Series A,} 97 (2002), 27--42.
\bibitem{Met99}
    {\sc K. Metsch,}
    {\sl Bose-Burton type theorems for finite projective, Affine
              and Polar spaces,}
    {\em Surveys in Combinatorics}, ed. by Lamb and Preece,
         London Mathematical Society, Lecture Notes Series 267, 1999.
\bibitem{Tho1}
    {\sc S. Thomas,}
    {\sl Designs over finite fields,}
    {\em Geometriae Dedicata,} 21 (1987), 237--242.
\bibitem{Tho2}
    {\sc S. Thomas,}
    {\sl Designs and partial geometries over finite fields,}
    {\em Geometriae Dedicata,} 63 (1996), 247--253.
\bibitem{Wil72}
    {\sc R. M. Wilson,}
    {\sl Cyclotomy and difference families in elementary abelian groups,}
    {\em J.~Number Theory,} 4 (1972), 14--47.
\end{thebibliography}

\end{document}